\newcolumntype{P}[1]{>{\centering\arraybackslash}p{#1}}
\newtheorem{theorem}{Theorem}
\newtheorem{definition}[theorem]{Definition}
\newtheorem{lemma}[theorem]{Lemma}
\newtheorem{corollary}[theorem]{Corollary}
\newtheorem{remark}[theorem]{Remark}
\newtheorem{assumption}[theorem]{Assumption}
\newcommand*{\N}{\ensuremath{\mathbb{N}}}
\newcommand*{\Z}{\ensuremath{\mathbb{Z}}}
\newcommand*{\R}{\ensuremath{\mathbb{R}}}
\newcommand*{\C}{\ensuremath{\mathbb{C}}}
\renewcommand{\i}{\mathrm{i}}
\renewcommand{\phi}{\varphi}
\renewcommand{\rho}{{\varrho}}
\renewcommand{\epsilon}{{\varepsilon}}
\renewcommand{\d}[1]{\,\mathrm{d}#1 \,}
\newcommand{\J}{\mathcal{J}} % Bloch transform
\newcommand{\0}{{0}} %  {0}
\newcommand{\T}{{\mathcal{T}}}
\newcommand{\B}{{\mathcal{B}}}
\newcommand{\A}{{\mathcal{A}}}
\renewcommand{\B}{{\mathcal{B}}}
\newcommand{\K}{{\mathcal{K}}}
\newcommand{\p}{{per}}
\renewcommand{\L}{\mathcal{L}} 
\renewcommand{\Re}{\mathrm{Re}\,}
\newcommand{\M}{{\mathcal{M}}}
\newlength{\dhatheight}
\definecolor{xl}{rgb}{0.8,0.2,0.3}
\begin{document}
	
	\sloppy\title{High order complex contour discretization methods to simulate scattering problems in locally perturbed periodic waveguides}
\author{
Ruming Zhang\thanks{Institute of Applied and Numerical mathematics, Karlsruhe Institute of Technology, Karlsruhe, Germany
; \texttt{ruming.zhang@kit.edu}. }}
\date{}
\maketitle

\begin{abstract}
    In this paper, two high order  {complex contour discretization} methods are proposed to simulate wave propagation in locally perturbed periodic closed waveguides. As is well known the problem is not always uniquely solvable due to the existence of guided modes. The limiting absorption principle is a standard way to get the unique physical solution. Both methods are based on the Floquet-Bloch transform which transforms the original problem to an equivalent family of cell problems.  {The first method, which is designed based on a complex contour integral of the inverse Floquet-Bloch transform, is called the CCI method. The second method, which comes from an explicit definition of the radiation condition, is called the decomposition method. Due to the local perturbation, the family of cell problems are coupled with respect to the Floquet parameter and the computational complexity becomes much larger.  {To this end, high order  methods to discretize the complex contours are developed to have better performances.} Finally we  {give} the convergence results  {which we confirm with} numerical examples.}
    \\

\noindent    
{\bf Keywords: periodic waveguide, Floquet-Bloch transform, high order method, finite element method}
\end{abstract}
	
\section{Introduction}
Periodic structures are widely used in applications such as photonic crystals, for details we refer to \cite{Sakoda2001,Johns2002,Joann1995}. This topic also attracts the interests of many mathematicians and we refer to \cite{Kuchm2001,Kuchm2003,Lechl2011} for the studies from mathematical point of view.
It is well known that this kind of problems is challenging due to existence of guided modes. To obtain the unique physical solution, the {\em Limiting Absorption Principle (LAP)} is a standard process. The LAP process is to define the physical solution by the limit of unique solutions with absorption, as the absorption parameter tends to zero. For simplicity, in this paper we call the solution from the LAP process an LAP solution. Significant progresses have been made in the past few years in the study of this kind of  {problem}, from both theoretical and numerical point of view. For example,  in \cite{Hoang2011} with an analysis on the resolvent of the differential operator, a radiation condition was given for LAP solution in a periodic half guide, and in \cite{Fliss2015} the authors gave the radiation condition as well as  semi-analytic representations for LAP solutions in the full guide. On the other hand, with the singular perturbation theory (see \cite{Colto1992}), the radiation condition is given by authors in \cite{Kirsc2017a}. With this method, radiation conditions for LAP solutions are also developed for periodic layers in 2D spaces and periodic open tubes in 3D spaces, see \cite{Kirsc2017a,Kirsc2019a,Kirsc2019b}. Besides the LAP, a Kondrat'ev's weighted spaces based method was adopted by  S. A. Nazarov in \cite{Nazar1982} and further works were carried out by him and his collaborators in \cite{Nazar2014,Nazar1994,Nazar1990,Nazar1991,Nazar2013}. On the other hand, numerical methods are also developed to compute the LAP solutions. For example, in \cite{Joly2006} an algorithm was proposed to compute exact Dirichlet-to-Neumann maps from the LAP process and this method was extended to periodic structures with local perturbation \cite{Fliss2009,Fliss2009a,Fliss2012,Fliss2013}. A method based on the doubling recursive procedure with an extrapolation technique was also developed to compute the DtN maps, see \cite{Ehrhardt2009,Ehrhardt2009a,Sun2009}. With a decomposition of  Bloch waves, a numerical method was proposed for waveguides with different refractive indexes on both directions in  \cite{Dohna2018}.

In this paper, we develop two  high order  {complex contour discretization} methods to simulate wave scattered by local perturbations embedded in periodic closed waveguides. For both methods, the Floquet-Bloch transform is the key to transform the problem defined in 2D unbounded domain to an equivalent coupled family of cell problems. The idea comes from some older papers of the author and A. Lechleiter for (locally perturbed) periodic surfaces see \cite{Lechl2016,Lechl2016a,Lechl2016b,Lechl2017,Zhang2017e}.
Compared to the locally perturbed periodic waveguides, the surface scattering problems are always uniquely solvable thus the analysis is relatively easier. For the problems discussed in this paper, we need to introduce a radiation condition to describe the LAP solutions.  The first (complex contour integral/CCI) method  is based on the author's previous work on purely periodic waveguides from both theoretical (see \cite{Zhang2019a}) and numerical (see \cite{Zhang2019b}) point of view. The second (decomposition) method comes from an explicit  {characterization} of the radiation condition proposed in \cite{Nazar1994,Kirsc2017a}. For both methods, analytic formulations for LAP solutions are given as extensions of nonperturbed cases. Due to the local perturbation, the whole system is coupled with respect to the Floquet parameter thus it is a problem defined in 3D. Thus the computational complexity  {is} much larger than  {for} periodic problems, where the system is not coupled. Based on different types of singularities, we develop different high order algorithms for  both formulations. Finally, we show that  {both numerical schemes} converge super-algebraically in the domain of the Floquet parameters.

The remaining part of this paper is organized as follows. In the second section, the mathematical model is given and two explicit formulations via the Floquet-Bloch transform are given in the third section. In Section 4, different numerical schemes are developed, and numerical examples are shown in Section 5.

\section{Mathematical model}

Let the closed waveguide  $\Omega:=\R\times(0,1)$ with the upper and lower boundaries  
$\Sigma_-:=\R\times\{0\},\,\Sigma_+:=\R\times\{1\}$. The scattering problem in $\Omega$ is described by 
the following equations:
\begin{equation}\label{eq:waveguide}
\Delta u+k^2(n+q)u=f \text{ in }\Omega;\quad \frac{\partial u}{\partial x_2}=0\text{ on }\Sigma_\pm.
\end{equation}
Here $n$ is $1$-periodic in $x_1$-direction, both $f$ and $q$ are compactly supported. Moreover both $n$ and $n+q$ are strictly positive, i.e., there is a constant $c>0$ such that 
\[
n(x)\geq c>0;\quad n(x)+q(x)\geq c>0\quad\text{ for all }x\in\Omega.
\]
For convenience, we define the following periodicity cells and their boundaries:
\begin{align*}
&\Omega_j:=\left(j-\frac{1}{2},j+\frac{1}{2}\right)\times(0,1);\quad \Gamma_j=\left\{j-\frac{1}{2}\right\}\times(0,1);\\
& \Sigma_-^j=\left(j-\frac{1}{2},j+\frac{1}{2}\right)\times\{0\};\quad\Sigma_+^j=\left(j-\frac{1}{2},j+\frac{1}{2}\right)\times\{1\}.
\end{align*}
Thus $\partial\Omega_j=\Gamma_j\cup\Sigma_-^j\cup\Gamma_{j+1}\cup\Sigma_+^j$. For simplicity, we assume that both $f$ and $q$ are supported in $\Omega_0$.
 For visualization of the locally perturbed periodic waveguide we refer to Figure \ref{waveguide}. 

\begin{figure}[ht]
	\centering
	\includegraphics[width=12cm,height=2.25cm]{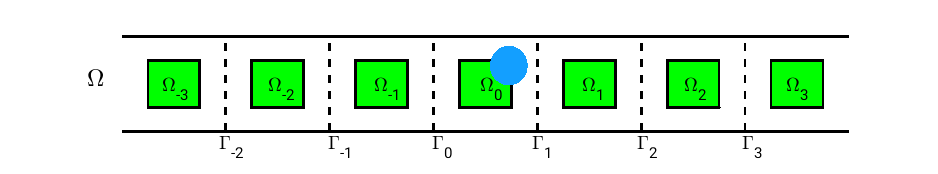}
	\caption{Periodic waveguide with local perturbation (blue disk).}
	\label{waveguide}
\end{figure}

As is well known, the  problem \eqref{eq:waveguide} is not always uniquely solvable for $k>0$. To carry out the LAP, we first consider the damped problem, by replacing $k^2$ with $k^2+\i\epsilon$ where $\epsilon>0$. 
It is well known that the damped problem is uniquely solvable in $H^1(\Omega)$. Then  the limit of solution when $\epsilon\rightarrow 0^+$  is set to be the physical solution, which is called an LAP solution in this paper. From \cite{Fliss2015,Zhang2019a,Kirsc2017a},
the radiation condition for LAP solutions in periodic waveguides have been described in different forms, and the forms are actually equivalent. 

\begin{definition}[Radiation Condition]\label{def:rc0}
	Suppose for the positive valued periodic refractive index $n$ and  wavenumber $k>0$, there are no standing waves. Then an LAP solution for \eqref{eq:waveguide} satisfies the following radiation conditions:
\begin{eqnarray*}
&& u(x)=u_+(x)+\sum_{\ell\in L_+}a_\ell^+\phi_\ell^+(x),\quad x_1>1/2;\\
&& u(x)=u_-(x)+\sum_{\ell\in L_-}a_\ell^-\phi_\ell^-(x),\quad x_1<-1/2;
\end{eqnarray*}
where $u_+$ ($u_-$)  decays exponentially when $x_1\rightarrow+\infty(-\infty)$, $\phi_\ell^+$ ($\phi_\ell^-$) are propagating modes  {traveling} to the right (left). $L_+$ ($L_-$) is the finite set of indexes for left (right) propagating modes and $a_\ell^\pm\in\C$ are coefficients.
\end{definition}

For definitions of standing waves and propagating modes we refer to Section 3.1 for details. The explicit formulation of LAP solutions plays a crucial role in development of numerical methods. In this paper, we show two ways to  develop  different numerical schemes. For convenience, we first define a subset  $H_{LAP}(\Omega)\subset H^1_{loc}(\Omega)$ which contains all the functions that satisfy the radiation condition.

% {\subsection{Well-posedness}ADD ARGUMENTS HERE. ANY FURTHER DISCUSSIONS ARE BASED ON WELL-POSEDNESS OF THE PROBLEMS!!!}

Following \cite{Fliss2015}, we first define the unbounded operator  in $L^2(\Omega)$ by
\[
 B=-\frac{1}{n+q}\Delta\text{ in }D(B):=\left\{\phi\in H^1(\Omega):\,\Delta u\in L^2(\Omega),\,\frac{\partial u}{\partial x_2}=0\text{ on }\Sigma_\pm\right\}.
\]
We need the following assumption to guarantee our theory.

\begin{assumption}
\label{asp0}
	For the positive valued  $k$, the equation $Bu=k^2 u$ only has a trivial solution in $D(B)$.
\end{assumption}

\begin{remark}
For perfectly periodic waveguide, an important result is that Assumption \ref{asp0} always holds (see \cite{Fliss2015,Kirsc2017a}).  {However, when there is a local perturbation, nontrivial solution may exist.} 

%Suppose for a periodic refractive index $n$ and wavenumber $k$, the problem \eqref{eq:waveguide} with $q=0$ is uniquely solvable and the solution decays exponentially.  If $u$ is positive in the support of $f$, let $q:=-\frac{f}{k^2 u}$, then $u\in H^1(\Omega)$  solves the problem \eqref{eq:waveguide} with vanishing source term. Then $k^2$ lies in the spectrum of $B$. 

Now we show an example of  { a $k>0$ such that}  {$k^2$} lies in the  {point spectrum} of $B$ for  positive $n$ and $n+q$. Suppose $n$ and $f$ are defined by:
\begin{equation*}
	n(x)=\begin{cases}
		1,\quad |x-a_0|>0.3;\\
		9,\quad 0.1<|x-a_0|<0.3;\\
		1+8\,\zeta(|x-a_0|;0.1,0.3),\quad\text{otherwise.}
	\end{cases};\quad	f(x)=\begin{cases}
	0,\quad |x-a_0|>0.3;\\
	0.5,\quad 0.1<|x-a_0|<0.3;\\
	0.5\,\zeta(|x-a_0|;0.1,0.3),\quad\text{otherwise;}
\end{cases}
\end{equation*}
where $a_0=(0,0.5)^\top$, and $\zeta(t)$ is a  {$C^4$}-continuous function defined {by}
\begin{equation*}
	\zeta(t;a,b)=\begin{cases}
		1,\quad t\leq a;\\
		0,\quad t\geq b;\\
		1-\left[\int_{\tau=a}^b (\tau-a)^4(\tau-b)^4\d \tau\right]^{-1}\left[\int_{\tau=a}^t (\tau-a)^4(\tau-b)^4\d \tau\right],\, a<t<b.
	\end{cases}
\end{equation*}
When $k^2=3.2$, the problem \eqref{eq:waveguide} with $q=0$ is uniquely solvable in $H^1(\Omega)$. The solution $u$ decays exponentially when $|x_1|\rightarrow\infty$ thus the radiation condition defined in Definition \ref{def:rc} is satisfied. The  solution in $\Omega_0$ is shown in Figure \ref{fig:refractive_index_perturbed}, (a), which is strictly positive in $\overline{\Omega_0}$.  Let $q=-k^{-2}f/u$ (see (b) in Figure  \ref{fig:refractive_index_perturbed}),  {be the} local perturbation. As $n+q$ is strictly positive (see (c) in Figure \ref{fig:refractive_index_perturbed}),  {$0\neq u$} is the solution satisfying \eqref{eq:waveguide} with $f=0$ and  the radiation condition.

\end{remark}

\begin{figure}[ht]
	\centering
	\begin{tabular}{c  c c}
		\includegraphics[width=0.27\textwidth]{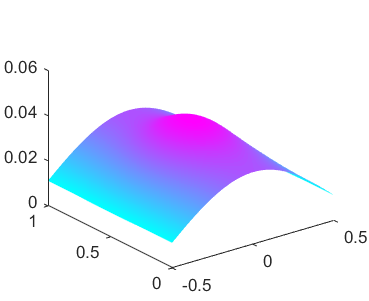} &
		\includegraphics[width=0.27\textwidth]{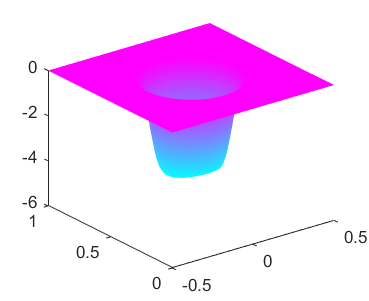} 
		& \includegraphics[width=0.27\textwidth]{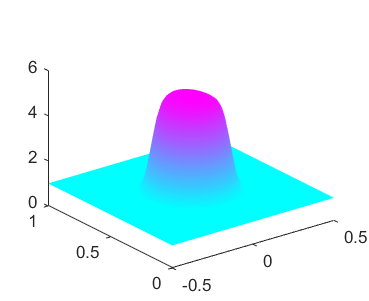}\\[-0cm]
		(a)&(b)&(c)
	\end{tabular}
	\caption{(a): numerical solution in $\Omega_0$; (b): the constructed local perturbation $q$; (c): the function $n+q$.}
	\label{fig:refractive_index_perturbed}
\end{figure}

\section{Explicit formulations of LAP solutions}

In this section, we formulate the LAP solutions  {using} two different methods introduced in \cite{Zhang2019b} and \cite{Kirsc2017a}, respectively. Note that the equation \eqref{eq:waveguide} can be rewritten as:
\begin{equation}\label{eq:waveguide_g}
\Delta u+k^2 n u =r\text{ in }\Omega,\quad \frac{\partial u}{\partial x_2}=0\text{ on }\partial\Sigma_\pm
\end{equation}
where $r=f-k^2q u$, which depends on $u$, is compactly supported in $\Omega_0$. In this section, we extend the formulations of LAP solutions with purely periodic refractive indexes to periodic ones with local perturbations, and also prove the unique solvability of the formulations.

\subsection{The  {$\alpha$-dependent periodic} problems}

%Before the introduction of the two different approaches, first we recall some definitions and results of quasi-periodic problems.

 From the Floquet theory, the $\alpha$-dependent periodic problems are particularly interesting as they are  associated to the propagating modes (eigenfunctions).  
The strong formulation for the $\alpha$-dependent periodic problem is to find a periodic solution $v$ such that:
\begin{equation}\label{eq:per}
	\Delta v+2\i\alpha\frac{\partial v}{\partial x_1}+(k^2 n-\alpha^2) v=g(x)\,\text{ in }\Omega_0;\quad
	 \frac{\partial v}{\partial x_2}=0\text{ on }\Sigma_\pm^0,
\end{equation}
  {where $g\in L^2(\Omega_0)$. Note that  $g=\J r=e^{-\i\alpha x_1}r$ (for the definition of $\J$ we refer to the end of this subsection), since $r$ is compactly supported in $\Omega_0$.}
 For each $\alpha$,  the weak formulation of the periodic problems  {is to} find $v\in H^1_\p(\Omega_0)$ such that 
\begin{equation}\label{eq:per_var}
\int_{\Omega_0}\left[\nabla v\cdot\nabla\overline{\psi}-\i\alpha\left(\frac{\partial v}{\partial x_1}\overline{\psi}-v\frac{\partial\overline{\psi}}{\partial x_1}\right)-(k^2 n -\alpha^2)v\overline{\psi}\right]\d x=-\int_{\Omega_0}g\overline{\psi}\d x
\end{equation}
for any $\psi\in H^1_\p(\Omega_0)$. From Riesz representation theorem, there is an operator $A(\alpha,k):\,H^1_\p(\Omega_0)\rightarrow H^1_\p(\Omega_0)$ such that
\[
\left<A(\alpha,k)\phi,\psi\right>=\int_{\Omega_0}\left[\nabla \phi\cdot\nabla\overline{\psi}-\i\alpha\left(\frac{\partial \phi}{\partial x_1}\overline{\psi}-\phi\frac{\partial\overline{\psi}}{\partial x_1}\right)-(k^2 n -\alpha^2) \phi\overline{\psi}\right]\d x,
\]
where $\left<\cdot,\cdot\right>$ is the inner product in the space $H^1_\p(\Omega_0)$. It is obvious that $A(\alpha,k)$ is  a Fredholm operator (see \cite{Kirsc2017a}). When $k>0$ and $\alpha\in\R$, $A(\alpha,k)$ is a self-adjoint operator. There are operators $A_1,A_2,A_3,A_4$ which do not depend on $\alpha$ and $k$ with obvious definitions such that
\begin{equation}
 \label{eq:decomp_A}
 A(\alpha,k)=A_1+\alpha A_2+\alpha^2 A_3+k^2 A_4.
\end{equation}
Thus $A(\alpha,k)$ depends analytically on both $\alpha$ and $k$. From \cite{Kirsc2017a}, for fixed $k^2$, there are only finitely many  $\alpha$'s in $[-\pi,\pi]$ such that $A(\alpha,k)$ is not invertible, which are called exceptional values. The set of exceptional values is denoted by $S(k)$. They are solutions of the quadratic eigenvalue problems for fixed $k$, thus can be computed in a standard way (for details we refer to Section 4.2). In the following, we introduce some definitions and notations concerning exceptional values without proofs. For details we refer to \cite{Ehrhardt2009,Ehrhardt2009a,Fliss2015}.

For any $\alpha\in[-\pi,\pi]$, there is a family of analytic functions $\mu_i(\alpha)$ defined in $[-\pi,\pi]$ such that:
\[
 (A_1+\alpha A_2+\alpha^2 A_3)\phi=-\mu_i(\alpha) A_4.
\]
%With a special ordering, we obtain a family of real analytic functions $\mu_i(\alpha)$   
For 2D periodic waveguide, none of the analytic functions is constant (see \cite{Fliss2015}). For any index $i$, the graph of the function $\mu_i$, i.e., $\{\big(\alpha,\mu_i(\alpha)\big):\,\alpha\in[-\pi,\pi]\}$, is a dispersion curve. All the dispersion curves compose dispersion diagrams (see Figure \ref{fig:dd}).

\begin{figure}[ht]
	\centering
	\begin{tabular}{c  c}
		\includegraphics[width=0.35\textwidth]{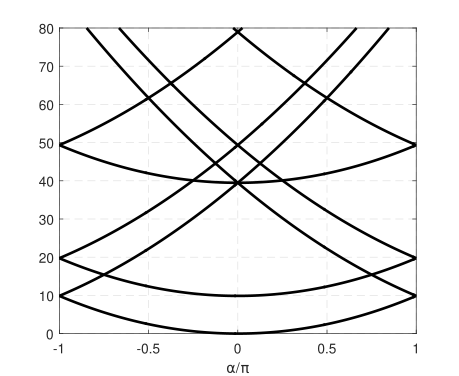} 
		& \includegraphics[width=0.35\textwidth]{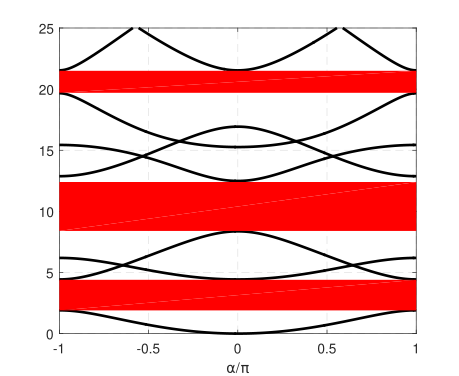}\\[-0cm]
	\end{tabular}
	\caption{Dispersion diagrams for different $n$}
	\label{fig:dd}
\end{figure}

For any fixed  $k>0$, there is a finite set $I$ ( {maybe} empty, for example when $k^2$ lies in the red bands on the right picture of Figure \ref{fig:dd}) such that  $S(k)=\Big\{\alpha\in[-\pi,\pi]:\,\exists\,i\in I,\,s.t., \,\mu_i(\alpha)=k^2\Big\}$.  
Corresponding to each dispersion curve $\mu_i(\alpha)$, there is also a family of eigenfunctions $\{\phi_i(\alpha,x):\,i\in I\}$ which also depend analytically on $\alpha$. When $\mu'_i(\alpha)>0$ ($\mu'_i(\alpha)<0$), then the corresponding eigenfunction $\phi_i(\alpha,\cdot)$ propagates to the right (left); when $\mu'_i(\alpha)=0$, then  $\phi_i(\alpha,\cdot)$ is a standing wave. %When $S(k)=\emptyset$, the problem \eqref{eq:waveguide} admits a unique solution which decays exponentially at the infinity. 
For fixed $k>0$,  $S(k)$ is divided into the following subsets:
\begin{eqnarray*}
	&& S_-(k):=\left\{\alpha\in[0,2\pi]:\,\exists \, i\in I,\,s.t.,\,\mu_i(\alpha)=k^2,\,\mu'_i(\alpha)<0\right\};\\
	&& S_+(k):=\left\{\alpha\in[0,2\pi]:\,\exists \, i\in I,\,s.t.,\,\mu_i(\alpha)=k^2,\,\mu'_i(\alpha)>0\right\};\\	
	&& S_0(k):=\left\{\alpha\in[0,2\pi]:\,\exists \, i\in I,\,s.t.,\,\mu_i(\alpha)=k^2,\,\mu'_i(\alpha)=0\right\}.
\end{eqnarray*}
Note that when $S_0(k)\neq\emptyset$, the LAP does not work, so we have to make the following assumption.

\begin{assumption}\label{asp1}
In this paper, we assume that $S_0(k)=\emptyset$.
\end{assumption}

In \cite{Fliss2015}, it has been proved that the positive valued $k$'s such that Assumptions  \ref{asp1} holds is only a discrete set, thus this assumption is reasonable.

At the end of this subsection, we apply the Floquet-Bloch transform to \eqref{eq:waveguide}  (for details we refer to the appendix). Let us denote by $\J u$ the Floquet-Bloch transform of $u$ in $x_1$ direction (see Appendix), and set  $v(\alpha,x):=(\J u)(\alpha,x)$ where $\alpha\in[-\pi,\pi]$. With formal calculation, $v(\alpha,\cdot)$ satisfies \eqref{eq:per} and 
the solution $u$ is  given by the inverse transform of $v$:
\begin{equation}\label{eq:inverse}
 u(x)=\int_{-\pi}^{\pi}e^{\i\alpha x_1} v(\alpha,x)\d\alpha,\quad x\in\Omega_0.
\end{equation}

From above arguments, \eqref{eq:per_var} is uniquely solvable in $H^1_\p(\Omega_0)$ for all $\alpha\in [-\pi,\pi]\setminus S(k)$. From analytic Fredholm theory, the function $v$ is also extended to $\alpha\in\C$ (see Theorem 4 in \cite{Zhang2019b}).

\begin{theorem}
\label{th:per_depend}
 The Floquet-Bloch transformed field $v(\alpha,\cdot)=(\J u)(\alpha,\cdot)$ is extended to an analytic function for $\alpha\in \C\setminus\mathbb{F}$ (where $\mathbb{F}$ is a discrete set) and  {a} meromorphic function for $\alpha\in \C$. Moreover, $e^{\i\alpha x_1}v(\alpha,\cdot)$ is periodic with respect to the real part of $\alpha$, i.e.,
 \[
  e^{\i(\alpha+2\pi) x_1}v(\alpha+2\pi,\cdot)=e^{\i\alpha x_1}v(\alpha,\cdot),\quad \text{ for almost all }\alpha\in\C.
 \]
\end{theorem}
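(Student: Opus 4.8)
The plan is to analyze the operator $A(\alpha,k)$ in the complexified parameter $\alpha\in\C$, since by \eqref{eq:decomp_A} we have $A(\alpha,k)=A_1+\alpha A_2+\alpha^2 A_3+k^2 A_4$, which is a polynomial (hence entire analytic) operator-valued function of $\alpha$ taking values in the bounded operators on $H^1_\p(\Omega_0)$. Since $A(\alpha,k)$ is Fredholm of index zero for every $\alpha$ and is invertible for all $\alpha\in[-\pi,\pi]\setminus S(k)$ (and in particular for at least one point), the analytic Fredholm theorem applies: $A(\alpha,k)^{-1}$ exists and is analytic on $\C$ except on a discrete set $\mathbb{F}$ of points where it has poles, i.e., it is meromorphic on $\C$. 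This is exactly Theorem 4 of \cite{Zhang2019b}, which I would cite. The transformed field is then $v(\alpha,\cdot) = A(\alpha,k)^{-1} F(\alpha)$, where $F(\alpha)\in H^1_\p(\Omega_0)$ is the Riesz representative of the right-hand side functional $\psi\mapsto -\int_{\Omega_0} e^{-\i\alpha x_1} r\,\overline{\psi}\d x$; since $r$ is compactly supported in $\Omega_0$ and $e^{-\i\alpha x_1}$ is entire in $\alpha$, the map $\alpha\mapsto F(\alpha)$ is itself entire analytic into $H^1_\p(\Omega_0)$. Composing an entire $H^1_\p$-valued map with a meromorphic operator-valued map gives that $v(\alpha,\cdot)$ is analytic on $\C\setminus\mathbb{F}$ and meromorphic on $\C$.

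For the quasi-periodicity statement, the key observation is the relation between the shift $\alpha\mapsto\alpha+2\pi$ and a gauge transformation. If $v$ solves \eqref{eq:per} with parameter $\alpha$, I would check directly that $\tilde v(x):=e^{-2\pi\i x_1} v(x)$ — interpreted with $v$ extended periodically, or equivalently reading off the $\alpha$-dependence of \eqref{eq:per} — solves \eqref{eq:per} with parameter $\alpha+2\pi$. Concretely, one substitutes $w=e^{\i\alpha x_1}v$, notes that $w$ is exactly the $\alpha$-quasi-periodic solution of \eqref{eq:quasi}, and observes that a solution which is $(\alpha+2\pi)$-quasi-periodic for \eqref{eq:quasi} is the same function as one which is $\alpha$-quasi-periodic, because $e^{\i(\alpha+2\pi)}=e^{\i\alpha}$ and the equation \eqref{eq:quasi} itself involves only $\alpha$ through the quasi-periodicity constraint, not through its coefficients. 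Since \eqref{eq:quasi} has a unique solution in the relevant (now complexified) space away from the exceptional set, this forces $e^{\i(\alpha+2\pi)x_1}v(\alpha+2\pi,\cdot)=e^{\i\alpha x_1}v(\alpha,\cdot)$ for all $\alpha$ outside the discrete set; by analytic continuation it then holds for almost all $\alpha\in\C$ (all of $\C$ minus the poles).

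I expect the main subtlety to be bookkeeping rather than a genuine obstacle: one must be careful that the "same function" identification in the quasi-periodic picture is legitimate when $\alpha$ is complex — i.e., that the space of $(\alpha+2\pi)$-quasi-periodic $H^1_{\loc}$-functions genuinely coincides with the space of $\alpha$-quasi-periodic ones — and that the uniqueness used (away from $\mathbb{F}$) is the uniqueness guaranteed by the invertibility of $A(\alpha,k)$, transported back through the substitution $w=e^{\i\alpha x_1}v$. One should also confirm that the discrete set $\mathbb{F}$ is $2\pi$-periodic in its real part, which follows from the same gauge equivalence since $A(\alpha,k)$ being non-invertible is equivalent to $A(\alpha+2\pi,k)$ being non-invertible under this correspondence. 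Once these identifications are in place, the two claimed properties — meromorphicity in $\alpha$ and $2\pi$-periodicity of $e^{\i\alpha x_1}v(\alpha,\cdot)$ — follow immediately, and the bulk of the rigor is simply invoking analytic Fredholm theory and the cited Theorem 4 of \cite{Zhang2019b}.
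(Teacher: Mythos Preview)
Your proposal is correct and matches the paper's approach: the paper does not give a proof in situ but simply invokes analytic Fredholm theory and cites Theorem~4 of \cite{Zhang2019b}, exactly as you do. Your sketch in fact supplies more detail than the paper, and the gauge-transformation argument you outline for the $2\pi$-periodicity of $e^{\i\alpha x_1}v(\alpha,\cdot)$ is the standard one underlying that cited result.
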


Note that when $S(k)\neq\emptyset$,  $v(\alpha,\cdot)$ does not  {exist} when $\alpha\in S(k)$ thus the inverse transform \eqref{eq:inverse} is not well defined. The formulation \eqref{eq:inverse} no longer works thus we need some modifications to give exact formulations for LAP solutions.

%Moreover, for almost all $\alpha\in(-\pi,\pi)$, $v_\epsilon(\alpha,\cdot)$ is the unique solution for \eqref{eq:per_var} with $g=f-k^2 q u_\epsilon$. Thus we can easily get the weak formulation for $v_\epsilon$, i.e., to find $v_\epsilon\in L^2\left((-\pi/2,\pi/2);H^1_\alpha(\Omega_0)\right)$ such that 
%\begin{equation}
% \label{eq:var_epsilon}
 %\begin{aligned}
%  &\int_{-\pi/2}^{\pi/2}\left<A(\alpha,k+\i\epsilon)v_\epsilon(\alpha,\cdot),\psi(\alpha,\cdot)\right>\d\alpha\\&-(k+\i\epsilon)^2\int_{\Omega_0}q(x)\left(\int_{-\pi/2}^{\pi/2}v_\epsilon(\alpha,x)e^{\i\alpha x_1}\d\alpha\right)\overline{\left(\int_{-\pi/2}^{\pi/2}e^{\i\alpha x_1}\psi(\alpha,x)\d\alpha\right)}\d x\\&=-\int_{\Omega_0}f(x)\overline{\left(\int_{-\pi/2}^{\pi/2}e^{\i\alpha x_1}\psi(\alpha,x)\d\alpha\right)}\d x,\quad \forall\,\psi\in L^2\left((-\pi/2,\pi/2);H^1_\p(\Omega_0)\right).
%  \end{aligned}
%\end{equation}
%In the following subsections, we will study the limit of the solution $v_\epsilon$ when $\epsilon\rightarrow 0^+$ with two different approaches.

\subsection{Complex contour integral method}

In this subsection, we introduce the first method -- the complex contour integral  (CCI) method. To guarantee that the CCI method works, we have to make the following assumption.

\begin{assumption}\label{asp2}
	Assume that $k>0$ such that $S_-(k)\cap S_+(k)=\emptyset$.
\end{assumption}

From \cite{Zhang2019b}, the positive valued $k$'s such that Assumptions   \ref{asp2} is satisfied is only a discrete set.  

\begin{remark}
	Assumption \ref{asp2} is actually not necessary for the LAP. For purely periodic case,   the CCI method has been extended to the case without it in \cite{Zhang2019b}. Concerning the length of this paper, we keep this assumption to only focus on the most important part.
\end{remark}

Suppose that Assumption \ref{asp1} and \ref{asp2} are satisfied. 
The main idea of the CCI method is to modify the integral contour $[-\pi,\pi]$ in \eqref{eq:inverse} such that the points in $S(k)$ are avoided. 
As the set $S(k)$ is finite, from the symmetry between $S_+(k)$ and $S_-(k)$ (see \cite{Zhang2019b}), let
\[
S_+(k)=\Big\{\widehat{\alpha}_1^+,\dots, \widehat{\alpha}_N^+\Big\}\text{ and }S_-(k)=\Big\{\widehat{\alpha}_1^-,\dots, \widehat{\alpha}_N^-\Big\}\quad\text{ where } {\widehat{\alpha}_j^-}=-\widehat{\alpha}_j^+.
\]
First let the disk with center $\alpha$ and radius $\delta>0$ be denoted by $B(\alpha,\delta)$, and define
\[
 D_\delta:=\big[(-\pi,\pi)\times(0,+\infty)\big]\cup\left[\cup_{j=1}^N B\left(\widehat{\alpha}_j^+,\delta\right)\right]\setminus\left[\cup_{j=1}^N \overline{B\left(\widehat{\alpha}_j^-,\delta\right)}\right].
\]
Define the new integral contour by:
\[
\Lambda=\partial D_\delta\setminus\big[\{-\pi,\pi\}\times\R\big].
\]
With Assumption \ref{asp1} and \ref{asp2}, we choose $\delta>0$ such that the following conditions are satisfied:
\begin{itemize}
\item any two disks do not have nonempty intersection;
\item the closure of each disk  only contains one exceptional value $\widehat{\alpha}_j^\pm$.%; \item third, any $\alpha\in\Lambda$ is not an exceptional value.
\end{itemize}
 %From \cite{Zhang2019b}, the exact formulation for the LAP solution is given.

Suppose the problem \eqref{eq:waveguide} has an LAP solution  $u\in H_{LAP}(\Omega)$, then $u$ is also the unique LAP solution of \eqref{eq:waveguide_g} with  $r:=f-k^2 q u$. In \cite{Zhang2019b}, the form of $u$ is given explicitly.   {Note that although $r$ depends on $u$, when $u$ is already a known LAP solution, we can still treat $r$ as some fixed function that is compactly supported in $\Omega_0$.}

\begin{theorem}[Theorem 7, \cite{Zhang2019b}]
Assumption \ref{asp1} and \ref{asp2} hold. Then the LAP solution for \eqref{eq:waveguide_g} is given by
\begin{equation}\label{eq:MD1_integral}
 u(x):=\int_\Lambda e^{\i\alpha x_1} v(\alpha,x)\d\alpha,\quad x\in\Omega.
\end{equation}
where $v(\alpha,\cdot)\in H^1_\p(\Omega_0)$ solves \eqref{eq:per_var}  for any fixed $\alpha\in\Lambda$ with  $g(x)=e^{-\i\alpha x_1}r(x) {=(\J r)(\alpha,x)}$. 
\end{theorem}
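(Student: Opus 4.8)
The plan is to verify that the right-hand side of \eqref{eq:MD1_integral} (i) is well defined, (ii) defines a solution of \eqref{eq:waveguide_g} satisfying the radiation condition of Definition \ref{def:rc0}, and (iii) agrees with the LAP solution. For step (i), recall from Theorem \ref{th:per_depend} that $v(\alpha,\cdot)$ is meromorphic in $\alpha\in\C$ with poles only in the discrete set $\mathbb F$; by the choice of $\delta$ the contour $\Lambda=\partial D_\delta\setminus(\{-\pi,\pi\}\times\R)$ stays in the domain of analyticity (the only points of $S(k)\subset\mathbb F$ on the real line are the $\widehat\alpha_j^\pm$, and these are excised by the small half-disks). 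The contour is compact except for the two vertical rays at $\Re\alpha=\pm\pi$ going to $+\i\infty$; on those rays $v(\alpha,\cdot)$ decays (indeed $\|v(\alpha,\cdot)\|_{H^1_\p}$ is exponentially small as $\Im\alpha\to+\infty$ because $A(\alpha,k)^{-1}$ is uniformly bounded there and $e^{-\i\alpha x_1}r$ is controlled), so the integral converges absolutely in $H^1_\p(\Omega_0)$-norm. Translating to a general cell $\Omega_m$ via the quasi-periodicity $e^{\i(\alpha+2\pi)x_1}v(\alpha+2\pi,\cdot)=e^{\i\alpha x_1}v(\alpha,\cdot)$ shows \eqref{eq:MD1_integral} defines an element of $H^1_{\loc}(\Omega)$.

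For step (ii), since $v(\alpha,\cdot)$ solves \eqref{eq:per_var} for each $\alpha\in\Lambda$, multiplying by $e^{\i\alpha x_1}$ recovers a solution of \eqref{eq:quasi}, and integrating the variational identity over $\Lambda$ (exchange of $\int_\Lambda$ and the spatial integral is justified by the absolute convergence above) gives that $u$ from \eqref{eq:MD1_integral} is a weak solution of \eqref{eq:waveguide_g} on $\Omega_0$; the quasi-periodicity relation extends this to all of $\Omega$. To extract the radiation condition, deform $\Lambda$ back toward the real segment $[-\pi,\pi]$: for $x_1>1/2$ one pushes the part of $\Lambda$ near each $\widehat\alpha_j^-$ (the small half-disk lying \emph{below} the real axis, outside $\overline{B(\widehat\alpha_j^-,\delta)}$) and near each $\widehat\alpha_j^+$ (the half-disk lying \emph{above}); by the residue theorem the deformation picks up, at each $\widehat\alpha_j^+$, a contribution $2\pi\i\,\mathrm{Res}_{\alpha=\widehat\alpha_j^+}\big(e^{\i\alpha x_1}v(\alpha,x)\big)$, which is precisely a multiple of the propagating mode $\phi_j^+(x)$ travelling to the right (this uses $\mu_j'(\widehat\alpha_j^+)>0$ from the definition of $S_+(k)$ and the simple-pole structure of $v$ at exceptional values, standard from \cite{Fliss2015}), while the remaining integral over the deformed-to-real contour, shifted slightly into $\Im\alpha>0$ away from the residues, is exponentially decaying in $x_1$ — this is the term $u_+$. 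The symmetric argument for $x_1<-1/2$, deforming the other way, produces $u_-$ plus the left-going modes indexed by $S_-(k)$. Assumption \ref{asp1} ($S_0(k)=\emptyset$) guarantees every exceptional value is either strictly left- or strictly right-going so that each residue is unambiguously assigned, and Assumption \ref{asp2} ($S_+(k)\cap S_-(k)=\emptyset$) guarantees the two families of half-disks are disjoint so $\Lambda$ is a well-defined simple contour.

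For step (iii), one compares with the damped problems. For $\epsilon>0$ the transformed field $v_\epsilon(\alpha,\cdot)$ solving \eqref{eq:per_var} with $k^2$ replaced by $k^2+\i\epsilon$ is analytic in a full neighbourhood of $[-\pi,\pi]$ in $\C$ (the damping moves the exceptional values off the real axis), so the inverse transform $u_\epsilon(x)=\int_{-\pi}^\pi e^{\i\alpha x_1}v_\epsilon(\alpha,x)\d\alpha$ is the unique $H^1(\Omega)$ solution of the damped problem. Since the perturbed exceptional values $\widehat\alpha_j^+(\epsilon)$ move into $\Im\alpha>0$ and $\widehat\alpha_j^-(\epsilon)$ into $\Im\alpha<0$ (a sign computation from $\mu_i'(\widehat\alpha_j^\pm)\ne0$ and $\Im(k^2+\i\epsilon)>0$), the real segment $[-\pi,\pi]$ can be deformed to $\Lambda$ without crossing any singularity of $v_\epsilon$, whence $u_\epsilon(x)=\int_\Lambda e^{\i\alpha x_1}v_\epsilon(\alpha,x)\d\alpha$; letting $\epsilon\to0^+$, $v_\epsilon(\alpha,\cdot)\to v(\alpha,\cdot)$ uniformly on the \emph{fixed} contour $\Lambda$ (which stays at positive distance from all limiting singularities, by the choice of $\delta$), and the dominated convergence theorem on $\Lambda$ gives $u_\epsilon\to u$ in $H^1_{\loc}(\Omega)$. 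Hence $u$ is the LAP solution. The main obstacle is making the residue bookkeeping in step (ii) rigorous: one must know that $v(\alpha,\cdot)$ has only simple poles at the exceptional values with residue proportional to the normalized Bloch eigenfunction $\phi_i(\widehat\alpha,\cdot)$, and that the "remaining" contour integral after extracting residues genuinely decays exponentially — both facts are available from \cite{Fliss2015,Zhang2019b} but their invocation, together with tracking the direction of contour deformation consistently for $x_1\gtrless\pm1/2$, is the delicate point; everything else is Fubini plus analytic continuation.
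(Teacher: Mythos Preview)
The paper does not prove this theorem; it is quoted verbatim as Theorem~7 of \cite{Zhang2019b} and its content (that \eqref{eq:MD1_integral} yields an LAP solution satisfying Definition~\ref{def:rc0}) is invoked by citation only. Your overall strategy---verify well-definedness, check the PDE, extract the radiation condition by contour deformation and residues, and identify the result with the $\epsilon\to0^+$ limit of the damped problems via a deformation argument that keeps $\Lambda$ fixed---is exactly the approach of \cite{Zhang2019b}, and step~(iii) in particular matches the argument the paper sketches (in a passage commented out between \texttt{\textbackslash iffalse}\dots\texttt{\textbackslash fi}) for the locally perturbed setting.

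There is, however, a concrete geometric misreading in step~(i). You describe $\Lambda$ as having ``two vertical rays at $\Re\alpha=\pm\pi$ going to $+\i\infty$'' and then argue absolute convergence from decay of $v$ along those rays. That is not what $\Lambda$ is: by definition $\Lambda=\partial D_\delta\setminus\big(\{-\pi,\pi\}\times\R\big)$, i.e.\ the vertical sides of the half-strip are \emph{removed}, so $\Lambda$ is a bounded curve---the real segment $[-\pi,\pi]$ with small semicircular detours (below the axis at each $\widehat\alpha_j^+$, above at each $\widehat\alpha_j^-$); compare Figure~\ref{fig:lambda}. Consequently the convergence issue you spend effort on is trivial (continuity of $\alpha\mapsto v(\alpha,\cdot)$ on a compact arc suffices), and your decay estimate ``$\|v(\alpha,\cdot)\|$ exponentially small as $\Im\alpha\to+\infty$'' is neither needed nor, in the form stated, correct.

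A second, smaller point: your residue bookkeeping in step~(ii) is muddled. To get decay for $x_1\to+\infty$ one shifts $\Lambda$ \emph{upward} into $\Im\alpha>0$ (so that $|e^{\i\alpha x_1}|=e^{-\Im\alpha\,x_1}$ is small); since $\Lambda$ passes \emph{below} each $\widehat\alpha_j^+$ and \emph{above} each $\widehat\alpha_j^-$, this upward shift crosses only the poles at $\widehat\alpha_j^+\in S_+(k)$, yielding the right-going modes, while the $\widehat\alpha_j^-$ are never crossed. Your text conflates which points are being pushed past and which contribute residues. Once the geometry of $\Lambda$ is corrected, both issues disappear and your outline is sound.
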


 {Now we have to prove that with Assumption \ref{asp0}, \ref{asp1} and \ref{asp2}, the problem \eqref{eq:MD1_integral} where $v(\alpha,\cdot)\in H^1_\p(\Omega_0)$ solves \eqref{eq:per_var}  for any fixed $\alpha\in\Lambda$ with  $g(x)=e^{-\i\alpha x_1}\left(f(x)-k^2 q (x)u(x)\right)$ has a unique solution in $H_{LAP}(\Omega)$, given any compactly supported $f\in L^2(\Omega_0)$. Then the unique solution is the LAP solution for \eqref{eq:waveguide}.

To describe the problem we first define the function space of the solution. Let $X:=L^2\left(\Lambda;H^1_\p(\Omega_0)\right)$. From the definition of $\Lambda$, the curve can be parameterized as $\Lambda:=\{s(t):=s_1(t)+\i s_2(t):\,t\in[0,1]\}$ where both $s_1$ and $s_2$ are real valued functions (for details we refer to Section 4.1). Then the norm of $X$ is defined as follows
\[
\|\phi\|_X=\left[\int_0^1\int_{\Omega_0} {\left[|\phi(s(t),x)|^2+|\nabla_x\phi(s(t),x)|^2\right]}|s'(t)|\d x\d t\right]^{1/2}.
\]

Since $v(\alpha,\cdot)$ satisfies\eqref{eq:per_var}, assume $\psi=\psi(\alpha,x)$ and integrating the equation on both sides with respect to $\alpha$ on the contour $\Lambda$, we arrive at the variational form of the problem.

 The weak formulation is to find $v\in X$ such that
\begin{equation*}
  \int_\Lambda\left<A(\alpha,k)v(\alpha,\cdot),\psi(\alpha,\cdot)\right>\d\alpha=-\int_{\Lambda}\int_{\Omega_0}e^{-\i\alpha x_1}r(x)\overline{\psi(\alpha,x)}\d x\d\alpha
\end{equation*}
for all $\psi\in X$. %We study the solvability of the above variational form. %, where the inner product of $X$ is defined by:
%\[
%\left<\phi,\psi\right>_X=\int_\Lambda\int_{\Omega_0}\left[\nabla_x \phi(\alpha,x)\cdot\nabla_x\overline{\psi(\alpha,x)}+\phi(\alpha,x)\overline{\psi(\alpha,x)}\right]\d x\d\alpha.
%\]
From Riesz representation theorem, there is an  { operator} $\A$  defined in $X$  such that
\[
 \left<\A \phi,\psi\right>_X=\int_\Lambda\left<A(\alpha,k)\phi(\alpha,\cdot),\psi(\alpha,\cdot)\right>\d\alpha,\quad\forall\,w,\,\psi\in X.
\]
 As any $\alpha\in\Lambda$ is not an exceptional value, $A(\alpha,k)$ is always invertible. Thus $\A$ is invertible in $X$.}

As $r=f-k^2 q u$, the variational formulation is written in the form: 
\begin{equation}
 \label{eq:var_MD1}
  \left<\A v,\psi\right>_X-k^2\int_\Lambda\int_{\Omega_0}e^{-\i\alpha x_1}q(x)u(x)\overline{\psi(\alpha,x)}\d x\d\alpha=-\int_\Lambda\int_{\Omega_0}e^{-\i\alpha x_1}f(x)\overline{\psi(\alpha,x)}\d x\d\alpha.
\end{equation}
For simplicity, define the following operators. Let 
\[
(\K v)(x):=\int_\Lambda e^{\i\alpha x_1}v(\alpha,x)\d\alpha,\quad x\in\Omega_0.
\]
From Riesz representation theorem, there is an operator $\T$ in $H^1_\p(\Omega_0)$ such that 
\[ \left<\T\phi,\psi\right>=\int_{\Omega_0}q(x)\phi(x)\overline{\psi(x)}\d x,\,\text{ for any }\phi,\,\psi\in H^1_\p(\Omega_0).
\]
As  {$H^1(\Omega_0)$} is  {compactly} embedded in  {$L^2(\Omega_0)$}, the operator $\T$ is also compact. Similarly, define $\L$  by:
\[
\left<\L\phi,\psi\right>=\int_{\Omega_0}\phi(x)\overline{\psi(x)}\d x,\,\text{ for any }\phi\in L^2(\Omega_0),\,\psi\in H^1_\p(\Omega_0).
\]
Then $\L$ is bounded from $L^2(\Omega_0)$ to $H^1_\p(\Omega_0)$.
Thus \eqref{eq:var_MD1} is equivalent to
\begin{equation}
    \label{eq:MD1}
    (\A-k^2\K^*\T\K) {v}=-\K^* \L f.
\end{equation}

Now we study the property of the operator $\K$. We begin with a classical Minkowski integral inequality. 

\begin{lemma}[\cite{Hardy1988}, Theorem 202]
\label{lm:Minkowski} 
Suppose $(S_1, \mu_1 )$ and $(S_2 , \mu_2 )$ are two measure spaces and $F : S_1 \times S_2 \rightarrow \R$
is measurable. Then the following inequality holds for any $p \geq 1$
\begin{equation}
 \left[\int_{S_2}\left|\int_{S_1}F(y,z)\d\mu_1(y)\right|^p\d \mu_2(z)\right]^{1/p}
 \leq \int_{S_1}\left(\int_{S_2}|F(y,z)|^p\d \mu_2(z)\right)^{1/p}\d\mu_1(y).
\end{equation}

\end{lemma}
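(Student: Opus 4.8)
The statement is the classical Minkowski integral inequality, so the plan is to follow the standard duality-free argument based on Tonelli's theorem and H\"older's inequality, reducing first to the case of nonnegative $F$ (replacing $F$ by $|F|$ only increases the left-hand side and leaves the right-hand side unchanged) and disposing of the case $p=1$ immediately, where the claimed inequality is in fact an equality that follows directly from Tonelli. So assume $p>1$ and $F\geq 0$, and let $q=p/(p-1)$ be the conjugate exponent. Write $G(z):=\int_{S_1}F(y,z)\d{\mu_1(y)}$; this is a measurable function of $z$ by Tonelli. We may also assume the right-hand side of the inequality is finite, since otherwise there is nothing to prove.

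The core computation is the chain
\[
\int_{S_2}G(z)^p\d{\mu_2(z)}=\int_{S_2}G(z)^{p-1}\left(\int_{S_1}F(y,z)\d{\mu_1(y)}\right)\d{\mu_2(z)}
=\int_{S_1}\left(\int_{S_2}G(z)^{p-1}F(y,z)\d{\mu_2(z)}\right)\d{\mu_1(y)},
\]
where the interchange of the order of integration in the last step is justified by Tonelli (all integrands nonnegative). Then I would apply H\"older's inequality in the inner integral, with exponents $q$ and $p$, to the factors $G(z)^{p-1}$ and $F(y,z)$:
\[
\int_{S_2}G(z)^{p-1}F(y,z)\d{\mu_2(z)}\leq\left(\int_{S_2}G(z)^{(p-1)q}\d{\mu_2(z)}\right)^{1/q}\left(\int_{S_2}F(y,z)^p\d{\mu_2(z)}\right)^{1/p}.
\]
Since $(p-1)q=p$, the first factor equals $\left(\int_{S_2}G^p\d{\mu_2}\right)^{1/q}$, which is independent of $y$ and may be pulled out of the $\mu_1$-integral. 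Substituting back yields
\[
\int_{S_2}G(z)^p\d{\mu_2(z)}\leq\left(\int_{S_2}G(z)^p\d{\mu_2(z)}\right)^{1/q}\int_{S_1}\left(\int_{S_2}F(y,z)^p\d{\mu_2(z)}\right)^{1/p}\d{\mu_1(y)},
\]
and dividing through by $\left(\int_{S_2}G^p\d{\mu_2}\right)^{1/q}$ and using $1-1/q=1/p$ gives exactly the asserted bound.

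The main obstacle is the legitimacy of that final division step, which requires knowing a priori that $A:=\int_{S_2}G^p\d{\mu_2}$ is finite and nonzero. The case $A=0$ is trivial (the left-hand side of the lemma vanishes). For finiteness, I would first dispose of it by a truncation argument: replace $S_1$ by a set of finite $\mu_1$-measure on which $F$ is bounded, so that the corresponding $G$ is bounded with support of finite measure and hence $A<\infty$; run the argument above on this truncation to get the inequality there, with the right-hand side dominated by the (finite) right-hand side of the full statement; then pass to the limit along an exhausting sequence of such truncations using the monotone convergence theorem on both sides. This standard approximation, together with the measurability remarks invoked above, completes the proof; everything else is the routine H\"older/Tonelli bookkeeping already displayed.
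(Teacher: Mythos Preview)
Your argument is the standard and correct proof of Minkowski's integral inequality. Note, however, that the paper does not actually prove this lemma: it is stated with a direct citation to \cite{Hardy1988}, Theorem 202, and used as a black box in the proof of Lemma~\ref{lm:k}. So there is no ``paper's own proof'' to compare against; your proposal simply supplies the classical justification that the paper omits by reference.
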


\begin{lemma}\label{lm:k}
 The operator $\K$ is bounded from $L^2(\Lambda;H^m(\Omega_0))$ to $H^m(\Omega_0)$ for any fixed non-negative { integer $m$}. Especially, it is bounded from $X$ to $H^1(\Omega_0)$.
\end{lemma}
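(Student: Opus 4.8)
The plan is to estimate the $H^m(\Omega_0)$ norm of $\K v$ directly by moving the $H^m$ derivatives inside the $\alpha$-integral and then applying the Minkowski integral inequality of Lemma~\ref{lm:Minkowski}. Fix a nonnegative integer $m$ and a multi-index $\beta$ with $|\beta|\le m$. Since $\Lambda$ is a fixed compact (piecewise smooth) contour bounded away from infinity and $e^{\i\alpha x_1}$ is smooth in $x$, one has $\partial_x^\beta\big(e^{\i\alpha x_1}v(\alpha,x)\big)=\sum_{\gamma\le\beta}\binom{\beta}{\gamma}(\i\alpha)^{\beta_1-\gamma_1}e^{\i\alpha x_1}\partial_x^\gamma v(\alpha,x)$, where only the $x_1$-component of the multi-index produces the polynomial-in-$\alpha$ factors. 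Because $\Lambda$ is compact, $|\alpha|$ is bounded on $\Lambda$, so $|(\i\alpha)^{\beta_1-\gamma_1}e^{\i\alpha x_1}|\le C_\Lambda$ uniformly for $\alpha\in\Lambda$ and $x\in\overline{\Omega_0}$. Hence $\big|\partial_x^\beta(\K v)(x)\big|\le C_\Lambda\int_\Lambda\sum_{\gamma\le\beta}|\partial_x^\gamma v(\alpha,x)|\,\d|\alpha|$.

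Next I would apply Lemma~\ref{lm:Minkowski} with $(S_1,\mu_1)=(\Lambda,|\d\alpha|)$, $(S_2,\mu_2)=(\Omega_0,\d x)$, $p=2$, and $F(\alpha,x)=\sum_{\gamma\le\beta}|\partial_x^\gamma v(\alpha,x)|$, obtaining
\[
\left\|\partial_x^\beta(\K v)\right\|_{L^2(\Omega_0)}\le C_\Lambda\int_\Lambda\left(\int_{\Omega_0}F(\alpha,x)^2\,\d x\right)^{1/2}\d|\alpha|\le C_\Lambda'\int_\Lambda\|v(\alpha,\cdot)\|_{H^m(\Omega_0)}\,\d|\alpha|.
\]
Summing over all $\beta$ with $|\beta|\le m$ gives $\|\K v\|_{H^m(\Omega_0)}\le C\int_\Lambda\|v(\alpha,\cdot)\|_{H^m(\Omega_0)}\,\d|\alpha|$. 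Finally, applying the Cauchy--Schwarz inequality in $\alpha$ on the finite-length contour $\Lambda$ yields $\int_\Lambda\|v(\alpha,\cdot)\|_{H^m(\Omega_0)}\,\d|\alpha|\le |\Lambda|^{1/2}\left(\int_\Lambda\|v(\alpha,\cdot)\|_{H^m(\Omega_0)}^2\,\d|\alpha|\right)^{1/2}=|\Lambda|^{1/2}\|v\|_{L^2(\Lambda;H^m(\Omega_0))}$, which is exactly the claimed boundedness. The case $m=1$ specializes to boundedness from $X=L^2(\Lambda;H^1_\p(\Omega_0))$ to $H^1(\Omega_0)$, noting that the periodicity of $v(\alpha,\cdot)$ is not needed for the norm estimate (it only affects the function space, not the inequality).

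The main obstacle is essentially bookkeeping rather than a genuine difficulty: one must justify differentiating under the integral sign (which is legitimate because $\Lambda$ is compact, $v(\alpha,\cdot)\in H^m$ with $\alpha\mapsto\|v(\alpha,\cdot)\|_{H^m}$ in $L^2(\Lambda)$, and the exponential factor is entire in $\alpha$ and smooth in $x$), and one must be careful that in the two-variable measure-space application of Minkowski's inequality the roles of the inner and outer integral are assigned correctly so that the $L^2$-in-$x$ norm ends up inside and the $L^1$-in-$\alpha$ norm outside. The mild subtlety worth a remark is that the length $|\Lambda|$ and the constant $C_\Lambda$ depend on $\delta$ and on $S(k)$, but since these are fixed once $k$ is fixed, the bound is uniform in $v$, which is all that is required.
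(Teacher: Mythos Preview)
Your proposal is correct and follows essentially the same approach as the paper: apply the Minkowski integral inequality (Lemma~\ref{lm:Minkowski}) with $p=2$, $S_1=\Lambda$, $S_2=\Omega_0$, and then Cauchy--Schwarz in $\alpha$ over the finite-length contour. The paper writes out only the case $m=0$ (starting from smooth $w$ and extending by density) and declares $m\ge 1$ ``similar'', whereas you treat general $m$ directly via the Leibniz expansion of $\partial_x^\beta(e^{\i\alpha x_1}v)$; this is merely a presentational difference, not a different method.
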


\begin{proof} {
 First consider the case when $m=0$. Given any $w\in C^\infty(\Lambda\times\Omega_0)$, then $\K w$ is well defined and uniformly bounded for any $x\in\Omega_0$. Recall the parameterization of $\Lambda$,
 \[
  \|\K w\|_{L^2(\Omega_0)}=\left[\int_{\Omega_0}\left|\int_{\Lambda}e^{\i\alpha x_1}w(\alpha,x)\rm{d}\alpha\right|^2\d x\right]^{1/2}=\left[\int_{\Omega_0}\left|\int_{0}^1 e^{\i s(t) x_1}w(s(t),x)s'(t)\rm{d}t\right|^2\d x\right]^{1/2}.
 \] 
 Then {from Lemma \ref{lm:Minkowski}} (with $p=1$) and the Cauchy-Schwartz inequality,
 \begin{align*}
  \|\K w\|_{L^2(\Omega_0)}&=\left[\int_{\Omega_0}\left|\int_{0}^1 e^{\i s(t) x_1}w(s(t),x)s'(t)\rm{d}t\right|^2\d x\right]^{1/2}
  \leq \int_0^1\left(\int_{\Omega_0}|e^{\i s(t) x_1}w(s(t),x)|^2|s'(t)|^2\d x\right)^{1/2}\d t\\
  &\leq \left(\int_0^1\int_{\Omega_0}|e^{\i s(t) x_1}w(s(t),x)|^2|s'(t)|\d x \d t\right)^{1/2}\left( {\int_0^1} |s'(t)| \d t\right)^{1/2}%\\  &\leq  C\left(\int_\Lambda\int_{\Omega_0}|w(\alpha,x)|^2\d x \d\alpha\right)^{1/2}=
  \leq C\|w\|_{L^2(\Lambda;L^2(\Omega_0))}.
 \end{align*}
From the density of $C^\infty(\Lambda\times\Omega_0)$ in $L^2(\Lambda;L^2(\Omega_0))$, the above inequality holds for all $w\in L^2(\Lambda;L^2(\Omega_0))$. Thus $\K$ is bounded from $L^2(\Lambda;L^2(\Omega_j))$ to $L^2(\Omega_j)$. 
For $m\geq 1$, the proof is similar thus is omitted. So $\K$ is bounded from $L^2(\Lambda;H^m(\Omega_0))$ to $H^m(\Omega_0)$ and particularly it is bounded from $X$ to $H^1(\Omega_0)$.
}
\end{proof}

As $\A$ is invertible, $\K$ and $\T$ are bounded and $\T$ is compact, $\A-k^2\K^*\T\K$ is a Fredholm operator. Thus it is invertible if and only if it is an injection. Then we obtain the  well-posedness of the problem \eqref{eq:MD1} in the following theorem.

\begin{theorem}\label{th:inv_cci}
 With Assumption \ref{asp0}, the operator $\A-k^2\K^*\T\K$ is invertible. That is, given any compactly supported $f\in L^2(\Omega_0)$, the problem \eqref{eq:MD1} has a unique solution in $X$.
\end{theorem}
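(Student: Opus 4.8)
The plan is to use the observation, already recorded just before the statement, that $\A-k^2\K^*\T\K$ is a Fredholm operator of index zero on $X$, so that its invertibility is equivalent to injectivity. The substance of the proof is therefore to show that the kernel is trivial, and this will be done by attaching to any kernel element a function on $\Omega$ that is an LAP solution of the \emph{homogeneous} problem \eqref{eq:waveguide_g} and then invoking Assumption \ref{asp}.

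First I would unfold the variational formulation. Suppose $w\in X$ satisfies $(\A-k^2\K^*\T\K)w=0$, that is, by the definitions of $\A$, $\K$ and $\T$,
\[
\int_\Lambda\left<A(\alpha,k)w(\alpha,\cdot),\psi(\alpha,\cdot)\right>\d\alpha=k^2\int_\Lambda\int_{\Omega_0}e^{-\i\alpha x_1}q(x)(\K w)(x)\overline{\psi(\alpha,x)}\d x\d\alpha\qquad\text{for all }\psi\in X .
\]
Testing against product functions $\psi(\alpha,x)=\chi(\alpha)\eta(x)$ with $\chi\in L^2(\Lambda)$ and $\eta\in H^1_\p(\Omega_0)$ arbitrary shows that for almost every $\alpha\in\Lambda$ the cell field $w(\alpha,\cdot)\in H^1_\p(\Omega_0)$ solves \eqref{eq:per_var} with right-hand side $r:=-k^2 q\,(\K w)$, a function compactly supported in $\Omega_0$ that, crucially, does not depend on $\alpha$. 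I would then set
\[
u(x):=\int_\Lambda e^{\i\alpha x_1}w(\alpha,x)\d\alpha,\qquad x\in\Omega ,
\]
which by construction restricts to $\K w$ on $\Omega_0$.

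Now I would apply the CCI representation (Theorem 7 of \cite{Zhang2019b}) to this fixed source $r$. Since no point of $\Lambda$ is an exceptional value, $A(\alpha,k)$ is invertible for every $\alpha\in\Lambda$, so the cell problem \eqref{eq:per_var} with data $e^{-\i\alpha x_1}r$ has a unique solution, necessarily equal to $w(\alpha,\cdot)$. Hence $u$ is exactly the LAP solution of \eqref{eq:waveguide_g} with this $r$; by \cite{Zhang2019a} it satisfies the radiation condition of Definition \ref{def:rc0}, so $u\in H_{LAP}(\Omega)$, and since $r=-k^2 qu$ it solves $\Delta u+k^2(n+q)u=0$ in $\Omega$ with $\partial u/\partial x_2=0$ on $\Sigma_\pm$, i.e., $(B-k^2 I)u=0$ in the weak sense on $H_{LAP}(\Omega)$. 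Assumption \ref{asp} then forces $u=0$, whence $r=-k^2qu=0$, and for almost every $\alpha\in\Lambda$ the field $w(\alpha,\cdot)$ solves \eqref{eq:per_var} with zero data; invertibility of $A(\alpha,k)$ on $\Lambda$ gives $w(\alpha,\cdot)=0$ a.e., i.e.\ $w=0$ in $X$. Injectivity, together with the Fredholm property, yields invertibility, and hence unique solvability of \eqref{eq:MD1} for every compactly supported $f\in L^2(\Omega_0)$.

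The main obstacle is the middle step: verifying rigorously that the contour-integral function $u$ manufactured from a kernel element is a legitimate competitor in $H_{LAP}(\Omega)$ solving the homogeneous equation. This is precisely the self-consistency between ``$w(\alpha,\cdot)$ solves the cell problems with data built from $\K w$'' and ``$\K w$ agrees with the LAP solution for that data,'' and it requires the CCI representation theorem and its radiation-condition property to be available for an arbitrary fixed compactly supported source; one must also check that $r$ is genuinely $\alpha$-independent as a function on $\Omega_0$ so that the representation theorem applies verbatim. The remaining ingredients — the Fredholm alternative, the unfolding of the variational identity, and the pointwise-in-$\alpha$ invertibility of $A(\alpha,k)$ on $\Lambda$ — are routine given the results assembled earlier in this section.
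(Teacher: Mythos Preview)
Your proposal is correct and follows essentially the same route as the paper: reduce to injectivity via the Fredholm property, show that any kernel element $w$ gives rise to $u=\K w$ which is the LAP solution of the homogeneous problem \eqref{eq:waveguide} (by identifying $w(\alpha,\cdot)$ with the unique cell solution for the $\alpha$-independent source $r=-k^2qu$ and invoking the CCI representation of \cite{Zhang2019b}), then apply Assumption~\ref{asp} to get $u=0$ and hence $w=0$. Your write-up is in fact more explicit than the paper's, which compresses the middle step into a single reference to \cite{Zhang2019b}.
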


\begin{proof}
 Suppose $v$ is a solution of \eqref{eq:MD1} with $f=0$, i.e.,  $(\A-k^2\K^*\T\K)v=0$. Then $u=\K v$ lies in $H^1(\Omega_0)$. From \cite{Zhang2019b}, $u=\K v$ is the LAP solution  {$r=-k^2 q u$ in \eqref{eq:waveguide_g}} thus it is  the solution of \eqref{eq:waveguide} with $f=0$ and satisfies the radiation condition  {of} Definition \ref{def:rc0}. From Assumption \ref{asp0}, $u=0$. Thus $v=0$, which implies that $\A-k^2\K^*\T\K$ is injective thus is invertible. So the problem \eqref{eq:MD1} is well-posed in the space $X$.
\end{proof}

We have further regularity results for the solution $ {v}$.

\begin{corollary}
 \label{cr:cci}
 With Assumption \ref{asp0}, given any compactly supported $f\in L^2(\Omega_0)$. The solution $ {v}$ depends piecewise smoothly on $\alpha\in\Lambda$ and for fixed $\alpha$, $ {v}(\alpha,\cdot)\in H^2_\p(\Omega_0)$.
\end{corollary}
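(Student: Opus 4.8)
The plan is to bootstrap regularity from the solved equation. By Theorem \ref{th:inv_cci} we have a unique $w \in X = L^2(\Lambda; H^1_\p(\Omega_0))$ solving \eqref{eq:MD1}, and $u := \K w \in H^1(\Omega_0)$ (by Lemma \ref{lm:k}). The key point is that for each fixed $\alpha \in \Lambda$, the function $w(\alpha,\cdot)$ solves the periodic cell problem \eqref{eq:per_var} with right-hand side $e^{-\i\alpha x_1} r(x)$ where $r = f - k^2 q u$. Since $f \in L^2(\Omega_0)$, $q \in L^\infty(\Omega_0)$ and $u \in H^1(\Omega_0) \hookrightarrow L^2(\Omega_0)$, the source $r$ lies in $L^2(\Omega_0)$, and so does $e^{-\i\alpha x_1} r$. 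Then I would invoke elliptic regularity for the operator $A(\alpha,k)$: the equation \eqref{eq:per} is a second-order elliptic equation with smooth (indeed analytic, piecewise constant) coefficients on the smooth domain $\Omega_0$ with Neumann conditions on $\Sigma_\pm^0$ and periodic conditions on $\Gamma_0, \Gamma_1$; since $A(\alpha,k)$ is invertible for $\alpha \in \Lambda \setminus S(k) = \Lambda$ (no exceptional values lie on the contour), the standard $H^2$-interior-and-boundary estimate gives $w(\alpha,\cdot) \in H^2_\p(\Omega_0)$ with $\|w(\alpha,\cdot)\|_{H^2_\p(\Omega_0)} \leq C(\alpha)\|e^{-\i\alpha x_1} r\|_{L^2(\Omega_0)}$.

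For the piecewise-smooth dependence on $\alpha$, I would use the analytic Fredholm framework already set up before Theorem \ref{th:per_depend}: the operator $A(\alpha,k) = A_1 + \alpha A_2 + \alpha^2 A_3 + k^2 A_4$ depends analytically (polynomially) on $\alpha$, and $A(\alpha,k)^{-1}$ is analytic in $\alpha$ on the connected components of $\Lambda$ away from $\mathbb{F}$ — but since $\Lambda$ avoids all exceptional values by construction of $D_\delta$ and the choice of $\delta$, $A(\alpha,k)^{-1}$ is in fact analytic on each smooth arc of $\Lambda$. The source $\alpha \mapsto e^{-\i\alpha x_1} r$ is analytic into $L^2(\Omega_0)$, and the solution operator into $H^2_\p(\Omega_0)$ inherits analyticity once we know the $H^2$ bound is locally uniform; the only place smoothness can fail is at the corner points of the contour $\Lambda$ (the junctions of the half-circles $\partial B(\widehat\alpha_j^\pm, \delta)$ with the real axis and with the vertical lines $\{\pm\pi\}\times\R$), hence "piecewise smoothly." So the structure is: (i) identify the cell equation solved by $w(\alpha,\cdot)$; (ii) check $r \in L^2$; (iii) apply elliptic $H^2$-regularity with the invertibility of $A(\alpha,k)$ on $\Lambda$; (iv) upgrade to analytic (hence smooth) $\alpha$-dependence on each arc via the polynomial dependence of $A(\alpha,k)$ and analytic Fredholm theory, with breakpoints only at the finitely many corners of $\Lambda$.

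The main obstacle is being careful that the $H^2$-regularity constant is locally bounded in $\alpha$ along $\Lambda$ so that the analytic-dependence argument goes through uniformly: one needs $\sup_{\alpha \in K}\|A(\alpha,k)^{-1}\| < \infty$ for compact $K \subset \Lambda$, which follows from continuity of $\alpha \mapsto A(\alpha,k)^{-1}$ in operator norm (a consequence of analyticity away from $\mathbb F$ plus $\Lambda \cap S(k) = \emptyset$) together with compactness of each closed arc. A secondary technical point is the regularity at the Neumann boundary and the periodic interface — this is standard since the coefficients in the example, and more generally by assumption, are smooth enough (the refractive index $n$ need only be, say, Lipschitz or piecewise smooth for $H^2$ regularity, and one should either assume this or note it is implicit); if $n$ has jumps that are not aligned with $\partial\Omega_0$ one only gets piecewise $H^2$, so I would state the corollary under the standing smoothness hypotheses on $n$ used throughout. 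No genuinely new idea is required beyond combining Theorem \ref{th:inv_cci}, the decomposition \eqref{eq:decomp_A}, and textbook elliptic regularity.
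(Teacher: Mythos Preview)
Your proposal is correct and follows essentially the same approach as the paper: the paper's own proof invokes perturbation theory (equivalently, the analytic dependence of $A(\alpha,k)^{-1}$ on $\alpha$ along each smooth arc of $\Lambda$, using that $A(\alpha,k)$ is invertible there) for the piecewise smooth dependence, and interior/elliptic regularity for $w(\alpha,\cdot)\in H^2_\p(\Omega_0)$. Your write-up is considerably more detailed than the paper's two-line argument, but the ingredients and logic are the same.
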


\begin{proof}
 As for any $\alpha\in\Lambda$, $A(\alpha,k)$ is invertible and $\Lambda$ is a piecewise smooth curve, $A(\alpha,k)$ depends piecewise smoothly on $\alpha$ from the perturbation theory. For each fixed $\alpha\in\Lambda$, $ {v}(\alpha,\cdot)\in H^2_\p(\Omega_0)$ from interior regularity.
 
\end{proof}

\subsection{Decomposition method}

In \cite{Kirsc2019a,Kirsc2019b} { (also see [34], Chapter 5, paragraph 2)}, an explicit formulation of the radiation condition is given for periodic open tubes in 3D. However, since the method is easily transferred to this case, we  introduce the decomposition method based on the  definition without proofs. 
%We recall an alternative representation for the LAP solution, for details we refer to \cite{Kirsc2017a}. %First we rewrite the equation as
%\[
%\Delta u+k^2 n u=f-k^2 q u \text{ in }\Omega,\quad u=0\text{ on }\partial\, \Omega,
%\]
%where the source term is still compactly supported in $\Omega_0$. 
%Note that this method also works for cases without Assumption \ref{asp2}, but we still require it since we want to compare the two methods. 
For simplicity, let
\[
S(k):=\left\{\widehat{\beta}_j:\,j\in J\right\}\text{ where $J$ is a finite set}.
\]
 {Although  this set has already been introduced in previous subsections, we use different notations to indicate the elements in this set, just to avoid  confusions.} For any fixed $j$,   the Fredholm operator $A(\widehat{\beta}_j,k)$ is not an injection,  and the null space $\widehat{Y}_j:=\mathcal{N}(A(\widehat{\beta}_j,k))$ is finite dimensional with dimension $m_j$.  
There is an orthonormal eigensystem in $\widehat{Y}_j$
 \begin{equation}\label{eq:orth_sys}
   \Big\{(\lambda_{\ell,j},\widehat{\phi}_{\ell,j}):\,\ell=1,2,\dots,m_j\Big\}
 \end{equation}
such that
\begin{equation}
 \label{eq:orth_relation}
 \int_{\Omega_0}\left[-\i\frac{\partial \widehat{\phi}_{\ell,j}}{\partial x_1}+\widehat{\beta}_j\widehat{\phi}_{\ell,j}\right]\overline{\psi}\d x=\lambda_{\ell,j} k\int_{\Omega_0}n\widehat{\phi}_{\ell,j}\overline{\psi}\d x\text{ for all }\psi\in \widehat{Y}_j 
\end{equation}
with normalization
\begin{equation}
	\label{eq:normal}
 2k\int_{\Omega_0}n\widehat{\phi}_{\ell,j}\overline{\widehat{\phi}_{\ell',j}}\d x=\delta_{\ell,\ell'}\text{ for }\ell,\ell'=1,2,\dots,m_j.
\end{equation}
 {Note that from the dispersion diagram, the positive integer $m_j$ indicates the number of dispersion curves that pass through the point $\left(\widehat{\beta}_j,k^2\right)$.
We can use the values of $\lambda_{\ell,j}$ to the direction of the eigenfunctions. When $\lambda_{\ell,j}>0$ ($<0$), the eigenfunction $\widehat{\phi}_{\ell,j}$ propagates to the right (left) and $\widehat{\beta}_j\in S_+(k)$ ($S_-(k)$); when $\lambda_{\ell,j}=0$, $\widehat{\phi}_{\ell,j}$ is a standing wave thus $\widehat{\beta}_j\in S_0(k)\neq\emptyset$. Since Assumption \ref{asp1} holds, $\lambda_{\ell,j}\neq 0$ for all possible $\ell$ and $j$. In this case, it is possible that for some $j\in J$, $\lambda_{j,\ell}>0$ and $\lambda_{j,\ell'}<0$ with $\ell\neq\ell'$, which means Assumption \ref{asp2} is not necessary for the decomposition method.
}

Let $\phi_{\ell,j}:=e^{\i\widehat{\beta}_j x_1} {\widehat{\phi}_{\ell,j}}$,
then $\phi_{\ell,j}$ is a $\widehat{\beta}_j$-quasi-periodic eigenfunction to the Helmholtz equation
\[
 \Delta \phi_{\ell,j}+k^2 n \phi_{\ell,j}=0\text{ in }\Omega_0;\quad \phi_{\ell,j}=0\text{ on }\Sigma_\pm^0.
\]
For each $j\in J$, let the space spanned by $\{\phi_{\ell,j},\,\ell=1,2,\dots,m_j\}$ be denoted by $Y_j$.

%With Assumption \ref{asp1}, $\lambda_{\ell,j}\neq 0$ for all $j\in J$ and $\ell=1,2,\dots,m_j$. In  \cite{Kirsc2017a}, a positive wave number $k$ such that this condition is satisfied is also called regular.
%\begin{assumption}
 %\label{asp3}
 %We assume that the wave number $k>0$ is regular (see \cite{Kirsc2017a}), that is, $\lambda_{\ell,j}\neq 0$ for all $j\in J$ and $\ell=1,2,\dots,m_j$.
%\end{assumption}
%\begin{remark}
% Actually Assumption \ref{asp3} is equivalent to Assumption \ref{asp1}. When $\lambda_{\ell,j}>0$, then the corresponding eigenfunction $\phi_{\ell,j}$ propagates to the right; when $\lambda_{\ell,j}<0$, then the corresponding eigenfunction $\phi_{\ell,j}$ propagates to the left.
%\end{remark}
%To formulate the solution, we first recall an altern radiation condition for the LAP solution.
%An explicit definition of the radiation condition is given, which is equivalent to Definition \ref{def:rc0}. 
With above notations, we  {are prepared to} recall the radiation condition which is equivalent, but more explicit, compared to that defined in Definition \ref{def:rc0}.

\begin{definition}[Theorem 3.7, \cite{Kirsc2019a}]\label{def:rc}
Suppose Assumption \ref{asp0} and \ref{asp1} hold.The LAP solution $u$  has a decomposition $u=u^{(1)}+u^{(2)}$ where $u^{(1)}\in H^1(\Omega)$ and $u^{(2)}$ has the form
\begin{equation}\label{eq:u2_decomp}
u^{(2)}(x)=\psi^+(x_1)\sum_{j\in  {J}}\sum_{\lambda_{\ell,j}>0}f^+_{\ell,j}\phi_{\ell,j}(x)+\psi^-(x_1)\sum_{j\in  {J}}\sum_{\lambda_{\ell,j}<0}f^-_{\ell,j}\phi_{\ell,j}(x)
\end{equation}
for some $f_{\ell,j}^\pm\in \C$ defined by (see \cite{Kirsc2019b})
\begin{equation}\label{eq:MD2_coefficient}
	f_{\ell,j}^\pm=	f_{\ell,j}\ =\ \frac{\i}{|\lambda_{\ell,j}|}\int_{\Omega_0}
r\,\overline{\phi_{\ell,j}}\,\d x.
\end{equation}
 and 
\begin{equation*}
\psi^+(x_1)\rightarrow 1\text{ as }x_1\rightarrow\infty,\,
\psi^+(x_1)\rightarrow 0\text{ as }x_1\rightarrow-\infty;\quad \psi^-(x_1)=\psi^+(-x_1).
\end{equation*}
\end{definition}

%With the source term  $r=f-k^2 q u$, the LAP solution still has the decomposition described in Definition \ref{def:rc}.

 {
	To simplify the  {representation}, we assume that ${\rm supp}(q)\subset(-1/2+\delta,1/2-\delta)\times(0,1)$ for a $\delta\in(0,1/2)$, we can choose proper $\psi^+$ and $\psi^-$ such that $\psi^\pm(x_1)q(x)=0$ for all $x\in\Omega$. For example, we can choose 
	\[
	\psi^+(x_1)=1\text{ for }x_1\geq 1-\frac{\delta}{4}\text{ and }\psi^+(x_1)=0\text{ for }x_1\leq 1-\frac{3\delta}{4}.
	\]}
 {Replacing} $u^{(2)}$ by its decomposition \ref{eq:u2_decomp} in \eqref{eq:waveguide_g}, we  easily arrive at the following equation for $u^{(1)}$:
\begin{equation}\label{eq:u1_1}
\Delta u^{(1)}+k^2 n u^{(1)}  = \M(r),
\end{equation}
where 
\[
 \M(r):=r-(\Delta+k^2n)u^{(2)}=r-\sum_{j\in J}\sum_{j=1}^m f_{\ell,j}\, g_{\ell,j}(x)
\]
with the functions $g_{\ell,j}$  defined by:
\begin{equation*}
g_{\ell,j}(x)=(\Delta+k^2n(x))\left[\psi^\pm(x_1)\phi_{\ell,j}(x)\right]=
  2(\psi^\pm(x_1))'\frac{\partial\phi_{\ell,j}}
{\partial x_1}(x)+(\psi^\pm(x_1))^{''}\,\phi_{\ell,j}(x)\text{ when }\pm\lambda_{\ell,j}>0.        
\end{equation*}
From the property of $\psi^\pm$, ${\rm supp}\left(g_{\ell,j}\right)\subset \Omega_0$ for all $j$ and $\ell$, thus $\M(r)$ is also compactly supported in $\Omega_0$.  %We extend $g^+_{\ell,j}$ periodically from $\Omega_1$ to $\Omega$, and also extend $g^-_{\ell,j}$ periodically from $\Omega_{-1}$ to $\Omega$.
 {
It is easily checked that $\M$ is a bounded linear operator in $L^2\left(\Omega_0\right)$ and ${\rm ran}(\M)$ is orthogonal to $Y_j$ for any $j\in J$. For details we refer to Theorem 4.4 in \cite{Kirsc2019b}.} %From direct calculation, the adjoint operator with respect to the inner product in $L^2(\Omega_0)$ has the form:
%\[
%\M^*\psi=\psi+\sum_{j\in J}\left[\sum_{j=1}^m\frac{\i}{|\lambda_{\ell,j}|}\int_{\Omega_0}\psi\overline{g_{\ell,j}}\d x\right]\phi_{\ell,j}(x).
%\]
%From arguments in \cite{Kirsc2017a,Kirsc2019a}, $S(g)$ is orthogonal to $\bigoplus_{j\in J}Y_j$.

Let $v(\beta,x):=\left(\J u^{(1)}\right)(\beta,x)$. Since $u^{(1)}$ decays exponentially as $|x_1|\rightarrow\infty$, $v(\beta,\cdot)\in H^1_\p(\Omega_0)$ exists for all $\beta\in[-\pi,\pi]$ and depends analytically on $\beta$ in the interval (and also  {extend} analytically to a small neighbourhood of $[-\pi,\pi]$). For any fixed $\beta$, it is the unique solution of \eqref{eq:per_var} with $r$  { replaced} by $\M (r)$.  %The weak formulation  is to find $v(\beta,\cdot)\in H^1_\p(\Omega_0)$ such that
%\begin{equation}
%\label{eq:MD2_2_per}
%		\left<A(\beta,k)v(\beta,\cdot),\phi\right>-
%		k^2\int\limits_{\Omega_0}e^{-\i\beta x_1}\M\left(q\,u^{(1)} \right)(x)\overline{\phi(x)}\,\d x= 
%		-\int\limits_{\Omega_0}e^{-\i\beta x_1}\M(f) (x)\overline{\phi(x)}\d x
%\end{equation}
%for all $\phi\in H^1_\p(\Omega_0)$, where
%\begin{equation}
%\label{eq:MD2_3_per}
%u^{(1)}(x)=\int_{-\pi}^{\pi}e^{\i\beta x_1}v(\beta,x)\,{\rm d}\beta\left(=\J^{-1}v\right).
%\end{equation}
%If $\phi$ also depends on $\beta$, i.e., $\phi\in L^2((-\pi,\pi);H^1_\p(\Omega_0))$, then we can easily get the variational form of $v$, i.e.,

 {Similarly to} the CCI method, the weak formulation for the problem is to find $v\in L^2((-\pi,\pi);H^1_\p(\Omega_0))$ such that
\begin{equation}
 \label{eq:MD2_var}
 \begin{aligned}
  \int_{-\pi}^\pi\left<A(\beta,k)v(\beta,\cdot),{\psi(\beta,\cdot)}\right>\d\beta-k^2\int_{\Omega_0}\M\left(q u^{(1)}\right)\overline{\left(\int_{-\pi}^\pi e^{\i\beta x_1}\psi(\beta,x)\d\beta\right)}\d x\\=-\int\limits_{\Omega_0}\M(f)\,\overline{\left(\int_{-\pi}^\pi e^{\i\beta x_1} \psi(\beta,x)\d\beta\right)}\,\d x.
 \end{aligned}
\end{equation}
 {Define the operator $\B$ by:
\[
\left<\B v,\phi\right>=\int_{-\pi}^\pi\left<A(\beta,k)v(\beta,x),{\phi(\beta,x)}\right>\d\beta,\quad\text{ for all }v,\,\phi\in L^2((-\pi,\pi);H^1_\p(\Omega_0)).
\]
Then with the definition of $\T$ and $\L$ in the previous section, 
%\begin{align*}
%\int_{\Omega_0}\M\left(q u^{(1)}\right)\overline{\left(\int_{-\pi}^\pi e^{\i\beta x_1}\phi(\beta,x)\d\beta\right)}\d x=\int_{\Omega_0} q u^{(1)}\overline{\M^*\J^{-1}\phi}\d x=\left<\T u^{(1)}, \M^*\J^{-1}\phi\right>\\
%=\left<\T\J^{-1} v,\M^*\J^{-1}\phi\right>=\left<\J\M\T\J^{-1}v,\phi\right>;
%\end{align*}
%and
%\begin{align*}
%    \int_{\Omega_0}\M\left(f\right)\overline{\left(\int_{-\pi}^\pi e^{\i\beta x_1}\phi(\beta,x)\d\beta\right)}\d x=\int_{\Omega_0}f\overline{\M^*\J^{-1}\phi}\d x=\left<\L f,\M^*\J^{-1}\phi\right>=\left<\J\M\L f,\phi\right>.
%\end{align*}
%Note that $\M^*$ is the adjoint operator of $\M$ with respect to the $L^2$-inner product thus is bounded.
the variational form \eqref{eq:MD2_var} is now equivalent to
\begin{equation}\label{eq:MD2}
	\left(\B-k^2 \J\M\T\J^{-1}\right)v=-\J\M\L f.
\end{equation}

As $A(\beta,k)$ is self-adjoint for real valued $\beta$, $\B$ is also self-adjoint. Thus the space $L^2((-\pi,\pi);H^1_\p(\Omega_0))$ has the following decomposition:
\[
L^2((-\pi,\pi);H^1_\p(\Omega_0))={\rm ran}(\B)\oplus\ker(\B)\text{ and }{\rm ran}(\B)\bot\ker(\B),
\]
and $\B$ is an isomorphism from $Y:={\rm ran}(\B)$ to itself.
Moreover, ${\rm ker}(\B)=\bigoplus_{j\in J}\widehat{Y}_j$.  Since ${\rm ran}(\M)$ is orthogonal to all $Y_j$, ${\rm ran}(\J\M)$ is orthogonal to $\widehat{Y}_j$, which implies that ${\rm ran}(\J\M)\subset Y$. Thus the operator $\B-k^2 \J\M\T\J^{-1}$ is  {an endomorphism of} $Y$.
As  $\T$ is compact, the operator $\J\M\T\J^{-1}$ is a compact operator with range in $Y$. As $\B$ is invertible in $Y$, $\B-k^2 \J\M\T\J^{-1}$ is a Fredholm operator. Thus we arrive at the  well-posed result of \eqref{eq:MD2}.

\begin{theorem}
 \label{th:inver_rc}
  {Under assumption \ref{asp0} and \ref{asp1}}, then the operator $\B-k^2 \J\M\T\J^{-1}$ is invertible in $Y$.  Given any compactly supported $f\in L^2(\Omega_0)$, the problem \eqref{eq:MD2} has a unique solution in $Y$.
\end{theorem}}

When  {$v$} is the unique solution of \eqref{eq:MD2},  {then $u$} satisfies the radiation condition introduced in Definition \ref{def:rc}. Thus it is the LAP solution for the problem \eqref{eq:waveguide}.

 {Finally}, we have to introduce a special technique  {for} solving \eqref{eq:MD2} numerically.  {Since $A(\beta,k)$ is not invertible in $H^1_\p(\Omega_0)$ when $\beta=\widehat{\beta}_j$, it is difficult to deal with  {such a point}.} To this end, we  modify the formulation  to avoid the singularities.  Note that $v(\beta,\cdot)$ depends analytically on $\beta$ in $(-\pi-\delta,\pi+\delta)\times(-2\sigma,2\sigma)$ for sufficiently small $\delta,\,\sigma>0$. From Cauchy integral formula,
\[
 \oint_{C_\sigma} e^{\i\beta x_1}v(\beta,x)\d\beta=0,
\]
where $C_\sigma$ is the boundary of the rectangle $[-\pi,\pi]\times[0,\sigma]$ which is oriented counter-clockwise. 
On the other hand, $e^{\i\beta x_1}v(\beta,\cdot)$ is $2\pi$-periodic with respect to the real part of $\beta$, this implies that the integrals on the left- and right edges cancel. Thus 
\[
 \int_{-\pi}^\pi e^{\i\beta x_1}v(\beta,x)\d\beta= \int_{-\pi}^\pi e^{\i(\beta+\i\sigma) x_1}v(\beta+\i\sigma,x)\d\beta
\]
where $v(\beta+\i\sigma,\cdot)$ solves \eqref{eq:per_var} with parameter $\beta+\i\sigma$. In the numerical schemes, to avoid the singularities we always replace $\beta$ by $\beta+\i\sigma$ in the variational formulation \eqref{eq:MD2_var}.

\section{Numerical schemes}

In this section, we introduce two numerical schemes based on the two representations. For both methods,  the periodic problems \eqref{eq:per_var} are computed several times. So we briefly recall the finite element method for these problems at the very beginning. 

Let $\M_h$ be a family of regular curved and quasi-uniform meshes which cover the domain $\Omega_0$. We also assume  {that the} number and heights of nodal points 
on the left and right boundaries of $\Omega_0$ are the same,  {hence} we can define functions that can be extended to periodic ones on the meshes. By omitting nodal points on the right boundary, we assume that the points $x_j$ ($j=1,2,\dots,M'$) be all the nodal points, where $M'$ is a positive integer. Let $M>M'$ be a positive integer  and $x_j$ ($j=M'+1,\dots,M$)  {the} points on the right boundary. Let $\{{\zeta}_j:\,j=1,2,\dots,M'\}$ be the family of piecewise linear and globally continuous basis functions that vanish on $\Sigma_\pm^0$ defined on the meshes $\M_h$, and ${\zeta}_j(x_\ell)=\delta_{j,\ell}$ where $j,\ell=1,2,\dots,M'$ and where $\delta_{j,\ell}$ is the Kronecker delta function. Define the function $\zeta_j$ on $\Gamma_1$ by the value on $\Gamma_0$, then it is also extended periodically to $\Omega$ as a $1$-periodic function. Then we define the finite dimensional subspace: 
\[
V_h:={\rm span}\Big\{\zeta_j:\,j=1,2,\dots,M'\Big\}\subset H^1_\p(\Omega_0).
\]
Then the solutions   {$v(\alpha,\cdot)$} are approximated in the finite dimensional subspace $V_h$. 

In the next subsections, we introduce the two numerical methods based on formulations of the LAP solution  by the complex contour integral and decomposition method.  {To simplify the process, we require the further assumption.
	\begin{assumption}\label{asp3}
		Assume that for any element  $\alpha\in S(k)$, there is only one dispersion curve passing through $(\alpha,k^2)$.
	\end{assumption}

Note that although Assumption \ref{asp3} is not necessary for both methods, the set with all the positive wave numbers such that Assumption \ref{asp3} does not hold is only a zero set. 
}

\subsection{The CCI method}

First we recall the variational form for the CCI method, i.e.,  { find} $v\in X$ such that
\begin{eqnarray}
    \label{eq:MD1_1}
    &&\begin{aligned}
    \int_\Lambda\left<A(\alpha,k)v(\alpha,\cdot),\psi(\alpha,\cdot)\right>\d\alpha-k^2\int_\Lambda\int_{\Omega_0}e^{-\i\alpha x_1}q(x)u(x)\overline{\psi(\alpha,x)}\d x\d\alpha\\=-\int_\Lambda\int_{\Omega_0}e^{-\i\alpha x_1}f(x)\overline{\psi(\alpha,x)}\d x\d\alpha
    \end{aligned}
    \\\label{eq:MD1_2}
   && u(x)=\int_\Lambda e^{ {\i\alpha x_1}}v(\alpha,x)\d\alpha.
\end{eqnarray}

In this subsection, we  focus of the discretization  {of} the curve $\Lambda$. 
The modified integral contour $\Lambda$ is composed  {of a} finite number of intervals and semicircles, then the first step is to parameterize $\Lambda$ piecewisely:
\[
\Lambda=\cup_{j=1}^Q \overline{\Big\{s_j(t)=s_j^1(t)+\i s^2_j(t):\,t\in I_j\Big\}},
\]
where $I_j$ are closed bounded intervals. For each fixed $j=1,2\dots,Q$, $s_j^1$ and $s^2_j$  {are real smooth functions on the interval $I_j$}. With a proper parameterization,  let the intervals $I_1,\dots,I_Q$  be  
\[
I_1=(0,a_1),\,I_2=(a_1,a_2),\dots, I_j=(a_{j-1},a_j),\dots, I_Q=(a_{Q-1},a_Q)=(a_{Q-1},1),
\]
 then 
\[
\Lambda=\Big\{s(t)=s_1(t)+\i s_2(t):\,t\in [0,1]\Big\}
\]
where $s_1=s_j^1$ and $s_2=s_j^2$ in each $I_j$, respectively. Since the trapezoidal rule converges much faster for smooth periodic functions  than nonperiodic smooth functions (see \cite{Weide2002}), we require
\[
s^{(m)}_j\left(a_{j-1}\right)=s^{(m)}_j\left(a_{j}\right)=0,\quad\forall\,j=1,2,\dots,Q\text{ and }m=1,2.
\]
Then $s'(t)$ is smooth in $[0,1]$ and it can be extended to a periodic smooth function in $\R$. For details of this technique we refer to Section 9.6 in \cite{Kress1998}.

Replacing $\alpha$ by $s(t)$ in the equations  \eqref{eq:MD1_1}-\eqref{eq:MD1_2} and  {letting} $\widetilde{v}(t,x):=v(s(t),x) s'(t)$, we arrive at the following equivalent equations:
\begin{eqnarray}
    \label{eq:MD1_1t}
    &&\begin{aligned}
    \int_0^1\left<A(s(t),k)\widetilde{v}(t,\cdot),\psi(t,\cdot)\right> \d t-k^2\int_0^1\int_{\Omega_0}e^{-\i s(t) x_1}q(x)u(x)\overline{\psi(t,x)}s'(t)\d x\d t\\=-\int_0^1\int_{\Omega_0}e^{-\i s(t) x_1}f(x)\overline{\psi(t,x)}s'(t)\d x\d t,
    \end{aligned}
    \\\label{eq:MD1_2t}
   && u(x)=\int_0^1 e^{\i s(t) x_1}\widetilde{v}(t,x)\d t.
\end{eqnarray}
The variational problem is to { seek } $\widetilde{v}\in \widetilde{X}:=L^2\left((0,1);H^1_\p(\Omega_0)\right)$ such that \eqref{eq:MD1_1t}-\eqref{eq:MD1_2t} hold for all test function  {$\psi\in\widetilde{X}$. From} Corollary \ref{cr:cci}, $\widetilde{v}\in C^\infty_\p\left([0,1];H^2_\p(\Omega_0)\right)$.

Now we discretize the system \eqref{eq:MD1_1t}-\eqref{eq:MD1_2t}. As the finite element discretization in the domain $\Omega_0$ has already been introduced  at the beginning of this section, we only introduce the trigonometric interpolation in the domain $[0,1]$. 
Let $N$ be a positive integer and the uniformly distributed nodal points be
\[
t_0=0;\quad t_j=\frac{j}{N}\quad\forall\,j=1,2,\dots,N.
\]
Suppose $N$ is an even number,  {then let us introduce the basic functions:}
\[
\xi_\ell(t):=\frac{1}{N}\sum_{m=-N/2+1}^{N/2}\exp\left(\i2 \pi m(t-t_\ell)\right),\,\ell=1,2,\dots,N.
\]
It is well  {known} that 
\[
\xi_\ell(t_{\ell'})=\delta_{\ell,\ell'}\text{ and }\int_0^1 \xi_\ell(t)\xi_{\ell'}(t)\d t=\frac{1}{N}\delta_{\ell,\ell'}.
\]

Now we are prepared to discretize the system \eqref{eq:MD1_1t}-\eqref{eq:MD1_2t} in the finite dimensional space:
\[
\widetilde{X}_{N,h}:=\underbrace{ V_h \bigoplus V_h \bigoplus\cdots \bigoplus V_h }_{N\text{ spaces }}.
\]
Let $\widetilde{v}_{N,h}\in \widetilde{X}_{N,h}$ be the approximation of $\widetilde{v}$  {of the form:}
\[
\widetilde{v}_{N,h}(t,x)=\sum_{\ell=1}^N\sum_{j=1}^M \widehat{v}_{\ell,j}\xi_\ell(t)\zeta_j(x),
\]
 {then
\begin{equation}
	\label{eq:MD1_2_fem}
u_{N,h}(x)=\sum_{j=1}^M \widehat{u}_j\zeta_j (x)\text{ where }\widehat{u}_j=\frac{1}{N}\sum_{\ell=1}^{N}e^{\i s(t_\ell)x_1}\widehat{ {v}}_{\ell,j}.
\end{equation}
}
With the test function $\psi(t,x)=\xi_{\ell'}(t)\zeta_{j'}(x)$, \eqref{eq:MD1_1t}-\eqref{eq:MD1_2t} is discretized as:
\begin{equation}
\label{eq:MD1_1_fem}
\begin{aligned}
\frac{1}{N}\delta_{\ell,\ell'}\sum_{j=1}^{M'}\widehat{v}_{\ell,j}\left<A(t_\ell,k)\zeta_j,\zeta_{j'}\right>-\frac{k^2}{N}\sum_{j=1}^{{M'}}\widehat{u}_j \left(\int_{\Omega_0}e^{-\i s(t_{\ell'}) x_1}q(x)\zeta_j(x) {\zeta_{j'}(x)}\d x\right)s'(t_{\ell'})\\=-\frac{1}{N}\left(\int_{\Omega_0}e^{-\i s(t_{\ell'}) x_1}f(x)\zeta_{j'}(x)\d x\right)s'(t_{\ell'}).
\end{aligned}
\end{equation}

%Note that in the second term in \eqref{eq:MD1_1_fem}, the sum is from $1$ to $M$ not $M'$. This is due to the compact support of $q$. In the support of $q$, $\zeta_\ell=\widetilde{\zeta}_\ell$ for $j=M'-M+1,\dots,M$ and $\widetilde{\zeta}_\ell=0$ for $j=1,\dots,M'-M,M+1,\dots,M'$. 
With \eqref{eq:MD1_2_fem} we also get the approximation of $u$, i.e., $u_{N,h}$, at the same time.

Finally the system \eqref{eq:MD1_2_fem}-\eqref{eq:MD1_1_fem} is summarized  {by} the following  {system}:
\begin{equation}
 \label{eq:MD1_matrix}
 \left(
 \begin{matrix}
  A_1 & 0 & \cdots &0 & C_1\\
  0 & A_2 & \cdots &0 & C_2\\
  \vdots & \vdots & \ddots & \vdots &\vdots\\
  0 & 0 & \cdots & A_{N} & C_{N}\\
  B_1 & B_2 & \cdots& B_{N} & I
 \end{matrix}
 \right)
  \left(
 \begin{matrix}
  V_1 \\ V_2 \\ \vdots \\ V_{N}\\ U
 \end{matrix}
 \right)= \left(
 \begin{matrix}
  F_1 \\ F_2 \\ \vdots \\ F_{N}\\ 0
 \end{matrix}
 \right),
\end{equation}
where $V_j=\left(\widehat{v}_{1,j},\dots,\widehat{v}_{M,j}\right)^\top$ and $U=\left(\widehat{u}_{1},\dots,\widehat{u}_{M}\right)^\top$. $A_j$ comes from the first term of \eqref{eq:MD1_1_fem}, $C_j$ comes from the second term of \eqref{eq:MD1_1_fem}, $B_j$ comes from \eqref{eq:MD1_2_fem}, $F_j$ comes from the right hand side. 
The size of this system is $(N+1)M\times (N+1)M$.
Note that $U$ is treated as an additional unknown vector just for a simpler representation of the linear system. Then the final task is to solve the linear sparse system with the size $(N+1)M\times(N+1)M$.

\subsection{The decomposition method}

 We introduce the decomposition method in this subsection. Since Assumption \ref{asp3} holds, $m_j=1$ for all $j\in J$, so we abbreviate the notations $\lambda_{\ell,j},\,\phi_{\ell,j},\,g_{\ell,j}$ as $\lambda_{j},\,\phi_j,\,g_j$.  First we define
	\begin{equation}\label{eq:f0}
		f_0(x):=\M(f)=f-\sum_{j\in J}\frac{\i}{|\lambda_{j}|}\left[\int_{\Omega_0}f(x)\overline{\phi_{j}}\d x\right] g_{j}(x).\end{equation}
We simplify the second term in \eqref{eq:MD2_var}:
\[
\int_{\Omega_0}\M\left(qu^{1}\right)\overline{\int_{-\pi}^\pi e^{\i\beta x_1}\psi(\beta,x)\d\beta}\d x=\int_{\Omega_0} u^{(1)}(x)\overline{\theta(x;\psi)}\d x
\]	
where $\theta(x;\psi)=q(x)\M^*\left[\int_{-\pi}^\pi e^{\i\beta x_1}\psi(\beta,x)\d\beta\right]$ and $\M^*$ is the adjoint operator of $\M$  defined as:
\[
\M^*(f)=f(x)+\sum_{j\in J}\frac{\i}{|\lambda_{j}|}\phi_j(x)\left[\int_{\Omega_0}\overline{g_j(y)}f(y)\d y.\right]
\]

	From the variational form  \eqref{eq:MD2_var}  (note that $\beta$ is already replaced by $\beta+\i\sigma$):
	\begin{eqnarray}
		\label{eq:MD2_1}
		&&\begin{aligned}
			\int_{-\pi}^\pi \left<A(\beta+\i\sigma,k)v(\beta {+\i\sigma},\cdot),\psi(\beta {+\i\sigma},\cdot)\right>\d\beta-k^2 \int_{\Omega_0} u^{(1)}(x)\overline{\theta(x;\psi)}\d x\\
			=-\int_{-\pi}^\pi\int_{\Omega_0}e^{-\i(\beta+\i\sigma) x_1} f_0(x)\overline{\psi(\beta {+\i\sigma},x)}\d x\d \beta;
		\end{aligned}
		\\\label{eq:MD2_2}
		&& u^{(1)}(x)=\int_{-\pi}^\pi e^{\i(\beta+\i\sigma) x_1}v(\beta+\i\sigma,x)\d\beta.
	\end{eqnarray}

	 {To discretize \eqref{eq:MD2_1}-\eqref{eq:MD2_2}, the first step is to  approximate the exceptional values $\widehat{\beta}_j$ and the corresponding systems
$\left\{\left(\lambda_{j},\widehat{\phi}_{j}\right)\right\}$. }
This implementation is carried out by the following two steps. The first step is to find all the real eigenvalues and corresponding eigenspaces of the quadratic pencil  $A(\alpha,k)=A_1+k^2 A_4+\alpha A_2+\alpha^2 A_3$ (see  \eqref{eq:decomp_A}).  The operators are discretized by the finite element method and let $A_1^h,A_2^h,A_3^h,A_4^h,B_1^h,B_2^h$ be the corresponding matrices. A standard way to solve above quadratic eigenvalue problem is to solve the following linearized problems:
	\begin{equation}\label{eq:gep}
		B_1^h W^h=\lambda B_2^h W^h,
	\end{equation}
	where \[
	B_1^h:=\left(\begin{matrix}
		A_1^h+k^2 A_4^h & 0\\ 0 & I     
	\end{matrix}
	\right),\quad B_2^h=\left(\begin{matrix}
		-A_2^h & -A_3^h \\ I & 0
	\end{matrix}\right).
	\]
By solving this problem, we find all the eigenvalues and eigenfunctions 
$$\left\{\left(\widehat{\beta}_j^h,\widehat{\phi}_{j}^h\right):\,j\in J^h\right\}. $$
 {We normalize the function $\widetilde{\phi}_j^h$ by
\[
 2k\int_{\Omega_0} n(x)\widehat\phi^h_{j}(x)\overline{\widehat\phi^h_{j}(x)}\d x=1.
\]
}
Since the discretized matrices approximate the exact ones when $h\rightarrow 0$, for sufficiently small $h>0$, $J^h=J$ and
\[
\left|\widehat{\beta}_j^h-\widehat{\beta}_j\right|=O(h^2),\quad\left\|\widehat{\phi}_ {j}^h-\widehat{\phi}_ {j}\right\|_{H^1_\p(\Omega_0)}=O(h)
\]
hold for all $j\in J$. For details of the error estimation we refer to \cite{Kolata1978,Bermu2000,Engst2014}. 
 {When Assumption \ref{asp3} no longer holds, we refer to the Appendix for details.}

 {Using} \eqref{eq:orth_relation} we also get the parameter $\lambda_j^h$. 
Let $\phi^h_{j}(x):=e^{\i \widehat{\beta}_j^h x_1}\widehat{\phi}^h_{j}(x)$, then the system $\Big\{(\lambda^h_{j},\phi^h_{j})\Big\}$ for fixed $j\in J$ is the numerical approximation of \eqref{eq:orth_sys} with the following convergence result:
\begin{equation}
\label{eq:err_eig}
\left|\lambda^h_{j}-\lambda_{j}\right|=O(h^2),\quad \left\|{\phi}_{j}^h-{\phi}_{j}\right\|_{H^1(\Omega_0)}=O(h).
\end{equation}
 With these values and functions, we get the function $\theta^h(x;\psi)$ by replacing $\lambda_j$, $\phi_j$ by $\lambda_j^h$, $\phi_j^h$.

\vspace{0.3cm}

Now we are prepared to discretize the system \eqref{eq:MD2_1}-\eqref{eq:MD2_2} with $\theta(x;\psi)$  { replaced} by $\theta^h(x;\psi)$,  {and} $f_0(x)$  {replaced} by $f_0^h(x)$. The related solutions are denoted by $v_h(\beta,x)$ and $u^{(1)}_h(x)$.  {With this substitution, $u$ reads:}
\begin{equation}
\label{eq:decomp_h}
 u(x)=u^{(1)}_h(x)+u^{(2)}_h(x)
\end{equation}
where $u^{( {1})}_h$ solves \[
 \Delta u^{( {1})}_h+k^2 n u^{( {1})}_h=\sum_{j\in J}\frac{\i}{|\lambda_j^h|}\int_{\Omega_0}f(x)\overline{\phi_j^h(x)}\d x g_j^h(x)
\]
and $g_j^h$ is defined in the same way as $g_j$  { replacing }$\phi_j$ by $\phi_j^h$.
%Similar to the CCI method, the first step is  to approximate the integral in \eqref{eq:MD2_1_2}. 
Let $g$ be a complex valued smooth function defined in $[0,1]$ such that $g(t)=\Re(g(t))+\i\sigma$ and
\[
 \Re(g(0))=-\pi\text{ and }\Re(g(1))=\pi,\quad g^{(m)}(0)=g^{(m)}(1)=0\quad\text{for all }\,m=1,2,\dots.
\]
Replace $\beta+\i\sigma$ by $g(t)$ in \eqref{eq:MD2_1}-\eqref{eq:MD2_2}  {and $\theta(x;\psi)$  by $\theta^h(x;\psi)$},  and define $\widetilde{v}(t,x):=v^h(g(t),x)g'(t)$, then $\widetilde{v}\in C^\infty_\p\left([0,1];H^2_\p(\Omega_0)\right)$. We arrive at the variational form for $\widetilde{v}$, i.e.,  { find} $\widetilde{v}\in L^2\left((0,1);H^1_\p(\Omega_0)\right)$ such that
\begin{eqnarray}\label{eq:MD2_1new}
    &&\begin{aligned}
    \int_0^1 \left<A(g(t),k)\widetilde{v}(t,\cdot),\psi(t,\cdot)\right>\d t-k^2\int_{\Omega_0}u^{(1)}_h(x)\overline{\theta^h(x;\psi)}\d x\\
    =-\int_0^1\int_{\Omega_0}e^{-\i g(t)x_1}f_0^h(x)\overline{\psi(t,x)}g'(t)\d x\d t;
    \end{aligned}\\\label{eq:MD2_2new}
    && u^{(1)}_h(x)=\int_0^1 e^{\i g(t) x_1}\widetilde{v}(t,x)\d t.
\end{eqnarray}    
Here  $f_0^h$ is defined in \eqref{eq:f0} by replacing $\phi_{j}$ by $\phi^h_{j}$.

We use the same discretization as in the CCI method and consider the discretized problem in the space $\widetilde{X}_{N,h}$, where $\widetilde{v}_{N,h}\subset \widetilde{X}_{N,h}$ is given by:
\[
\widetilde{v}_{N,h}=\sum_{\ell=1}^N\sum_{j=1}^{M}\widehat{v}_{\ell,j}\xi_\ell(t)\zeta_j(x).
\]
Then
\begin{equation}
\label{eq:MD2_2fem}
u^{(1)}_{N,h}=\sum_{j=1}^M \widehat{u}_j \zeta_j(x)\text{ where }\widehat{u}_j=\frac{1}{N}\sum_{\ell=1}^N e^{\i g(t_\ell)x_1}\widehat{v}_{\ell,j}
\end{equation}

With the test function $\psi(t,x)=\xi_{\ell'}(t)\zeta_{j'}(x)$, we get the discretized form of the system \eqref{eq:MD2_1new}-\eqref{eq:MD2_2new}:
\begin{equation}
\label{eq:MD2_1fem}
\begin{aligned}
\frac{1}{N}\delta_{\ell,\ell'}\sum_{j=1}^M\widehat{v}_{\ell,j}\left<A(t_\ell,k)\zeta_j,\zeta_{j'}\right>-k^2\sum_{j=1}^M \widehat{u}_j\left(\int_{\Omega_0}\zeta_j(x)\overline{\theta(x;\xi_{\ell'}(t)\zeta_{j'}(x))}\d x\right)\\
=-\frac{1}{N}\left(\int_{\Omega_0}e^{-\i g(t_{\ell'})x_1}f_0^h(x)\overline{\zeta_{j'}(x)}\d x\right)g'(t_{\ell'}).
\end{aligned}
\end{equation}

 {As in the CCI method, we approximate $u^{(1)}_h$ by $u^{(1)}_{N,h}$ with  coefficients defined by \eqref{eq:MD2_2fem}.}
Finally the system \eqref{eq:MD2_2fem}-\eqref{eq:MD2_1fem} is summarized  {by the following system}:
\begin{equation}
 \label{eq:MD2_matrix}
 \left(
 \begin{matrix}
  \widetilde{A}_1 & 0 & \cdots &0  & \widetilde{C}_1\\
  0 & \widetilde{A}_2 & \cdots &0 & \widetilde{C}_2\\
  \vdots & \vdots & \ddots & \vdots &  \vdots \\
  0 & 0 & \cdots & \widetilde{A}_{N}&   \widetilde{C}_{N}\\
  \widetilde{B}_1 & \widetilde{B}_2 & \cdots& \widetilde{B}_{N}  & I
 \end{matrix}
 \right)
  \left(
 \begin{matrix}
  V_1 \\ V_2 \\ \vdots \\ V_{N}\\ U
 \end{matrix}
 \right)= \left(
 \begin{matrix}
  \widetilde{F}_1 \\ \widetilde{F}_2 \\ \vdots \\ \widetilde{F}_{N} \\ 0
 \end{matrix}
 \right),
\end{equation}
where $\widetilde{A}_j$ comes from the first term of \eqref{eq:MD2_1fem}, $\widetilde{C}_j$ comes from the second term of \eqref{eq:MD2_1fem},  $\widetilde{B}_j$ comes from \eqref{eq:MD2_2fem}, $\widetilde{F}_j$ comes from the right hand side.
The size of this system is $(N+1)M\times (N+1)M$.

\subsection{Error estimations}

In the last part of this section, we present the error estimations for  {both algorithms}. Since both  solutions { $\widetilde{w}(t,x)$ of \eqref{eq:MD1_1t}-\eqref{eq:MD1_2t}}  and the solution $\widetilde{v}(t,x)$ of  \eqref{eq:MD2_1new}-\eqref{eq:MD2_2new} lie in the space $ C^\infty_\p\left([0,1];H^2_\p(\Omega_0)\right)$, we can use the error estimation  {given} in \cite{Zhang2017e}.  {The first} result is the error estimation of the interpolation in the finite dimensional space $\widetilde{X}_{N,h}$.  { Combining equation (32)   in \cite{Zhang2017e} and (46) in \cite{Lechl2016a}, we have the following results.}

\begin{theorem}
 Suppose $v\in C^\infty_\p\left([0,1];H^2_\p(\Omega_0)\right)$ and $v_{N,h}$ is the interpolation of $v$ in the subspace $\widetilde{X}_{N,h}$. Then 
 \[
  \min_{v_{N,h}\in \widetilde{X}_{N,h}}\|v-v_{N,h}\|_{L^2\left([0,1];H^1_\p(\Omega_0)\right)}\leq C\left(N^{-n}+h\right)\|v\|_{C^\infty_\p\left([0,1];H^2_\p(\Omega_0)\right)},
 \]
where $n$ can be any positive integer and $C$  {depending on $\|v\|_{C^n([0,1];H^2_\p(\Omega_0))}$} .
\end{theorem}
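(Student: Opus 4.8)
The plan is to decouple the approximation in the $t$-variable from the one in the $x$-variable and to exhibit a single concrete element of $\widetilde{X}_{N,h}$ that already realises the stated rate, which suffices because the infimum over $\widetilde{X}_{N,h}$ can only be smaller. Let $\Pi_N$ denote the trigonometric interpolation operator in $t$ with nodes $t_1,\dots,t_N$ and cardinal functions $\xi_\ell$, that is $(\Pi_N g)(t)=\sum_{\ell=1}^{N}g(t_\ell)\,\xi_\ell(t)$, and let $I_h$ denote the nodal finite element interpolation operator in $x$ onto $V_h$, which is well defined on $H^2_\p(\Omega_0)$ since $H^2(\Omega_0)\hookrightarrow C(\overline{\Omega_0})$ in two dimensions. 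The operators $\Pi_N$ and $I_h$ act on different variables and hence commute; for $v\in C^\infty_\p\left([0,1];H^2_\p(\Omega_0)\right)$ the function
\[
 v_{N,h}:=\Pi_N I_h v=I_h\Pi_N v=\sum_{\ell=1}^{N}\sum_{j=1}^{M'}v(t_\ell,x_j)\,\xi_\ell(t)\,\zeta_j(x)
\]
lies in $\widetilde{X}_{N,h}$, so it is enough to bound $\|v-v_{N,h}\|_{L^2\left((0,1);H^1_\p(\Omega_0)\right)}$, which I would do through the splitting $v-v_{N,h}=(v-\Pi_N v)+\Pi_N(v-I_h v)$ and estimate the two summands separately (the full details are essentially those of \cite{Zhang2017e}).

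For the first summand, $t\mapsto v(t,\cdot)$ is a $C^\infty$ and $1$-periodic function with values in the Hilbert space $H^1_\p(\Omega_0)$, and $v-\Pi_N v$ is precisely its trigonometric interpolation error. The classical super-algebraic convergence of trigonometric interpolation of smooth periodic functions (see \cite{Kress1998,Weide2002}) transfers verbatim to Hilbert-space-valued functions --- for instance by expanding in a Fourier series in $t$ with coefficients in $H^1_\p(\Omega_0)$ and estimating aliasing --- and yields, for every positive integer $n$,
\[
 \|v-\Pi_N v\|_{L^2\left((0,1);H^1_\p(\Omega_0)\right)}\ \le\ C_n\,N^{-n}\,\|v\|_{C^\infty_\p\left([0,1];H^1_\p(\Omega_0)\right)}\ \le\ C_n\,N^{-n}\,\|v\|_{C^\infty_\p\left([0,1];H^2_\p(\Omega_0)\right)}.
\]
This is the place where the smoothness of $v$ in $t$ is used, and the splitting leaves that regularity intact.

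For the second summand I would rely on the $L^2$-stability of trigonometric interpolation rather than on its logarithmically growing Lebesgue constant: the discrete Parseval identity gives $\|\Pi_N g\|_{L^2(0,1)}^2=\frac{1}{N}\sum_{\ell=1}^{N}|g(t_\ell)|^2$, and the identical relation holds for $H^1_\p(\Omega_0)$-valued $g$ with $|g(t_\ell)|$ replaced by $\|g(t_\ell,\cdot)\|_{H^1(\Omega_0)}$. Applying this to $g=v-I_h v$ and using the standard first-order interpolation estimate $\|w-I_h w\|_{H^1(\Omega_0)}\le C\,h\,\|w\|_{H^2(\Omega_0)}$ for piecewise linear elements on the regular, quasi-uniform mesh $\M_h$, one obtains
\[
 \big\|\Pi_N(v-I_h v)\big\|_{L^2\left((0,1);H^1_\p(\Omega_0)\right)}^2=\frac{1}{N}\sum_{\ell=1}^{N}\big\|v(t_\ell,\cdot)-I_h v(t_\ell,\cdot)\big\|_{H^1(\Omega_0)}^2\le C^2 h^2\,\|v\|_{C\left([0,1];H^2_\p(\Omega_0)\right)}^2,
\]
where the last step uses $\|v(t_\ell,\cdot)\|_{H^2(\Omega_0)}\le\|v\|_{C([0,1];H^2_\p(\Omega_0))}\le\|v\|_{C^\infty_\p([0,1];H^2_\p(\Omega_0))}$. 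Adding the two estimates and absorbing the constants into a single $C$ depending on $n$ gives $\|v-v_{N,h}\|_{L^2((0,1);H^1_\p(\Omega_0))}\le C(N^{-n}+h)\,\|v\|_{C^\infty_\p([0,1];H^2_\p(\Omega_0))}$, and taking the infimum over $\widetilde{X}_{N,h}$ finishes the argument.

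I do not expect a genuine obstacle: the argument is a textbook tensor-product interpolation estimate. The one point requiring attention --- the main (mild) difficulty --- is to carry out the split in the order $v-\Pi_N v$ followed by $\Pi_N(v-I_h v)$, so that $\Pi_N$ is only ever applied to the nodal values of $v-I_h v$, where $L^2$-stability is available and no spurious $\log N$ factor appears, while the $t$-regularity responsible for super-algebraic decay is charged entirely to $v$; a close second is to check carefully that the scalar trigonometric interpolation estimates and the discrete Parseval identity carry over to the $H^1_\p(\Omega_0)$-valued setting.
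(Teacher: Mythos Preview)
The paper does not actually prove this theorem; it is quoted verbatim from \cite{Zhang2017e} (Theorem~32 there) and invoked without argument, so there is no in-paper proof to compare against. Your proposal supplies precisely the standard tensor-product interpolation argument one would expect behind such a citation: the split $v-\Pi_N I_h v=(v-\Pi_N v)+\Pi_N(v-I_h v)$, super-algebraic decay of trigonometric interpolation on the smooth periodic $t$-factor, and the discrete Parseval identity $\|\Pi_N g\|_{L^2(0,1)}^2=N^{-1}\sum_\ell|g(t_\ell)|^2$ combined with the first-order $H^1$ finite-element interpolation bound on the $x$-factor. This is correct, and your remark about ordering the split so that $\Pi_N$ falls only on nodal values (avoiding the $\log N$ Lebesgue constant) is exactly the right care to take.
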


This implies that the interpolation decays super algebraically with respect to the parameter $1/N$ but linearly with respect to $h$. With this result, we get the error estimations for finite element solutions of the variational problems \eqref{eq:MD1_1t}-\eqref{eq:MD1_2t} and \eqref{eq:MD2_1new}-\eqref{eq:MD2_2new}. 

\begin{theorem}[Theorem 4, \cite{Zhang2017e}]
\label{th:err}
Let $\widetilde{w}$ be the exact solution of \eqref{eq:MD1_1t}-\eqref{eq:MD1_2t} and $\widetilde{w}_{N,h}$ be the finite element solution of the corresponding discretized form \eqref{eq:MD1_2_fem}-\eqref{eq:MD1_1_fem}. Let $\widetilde{v}$ be the exact solution of  \eqref{eq:MD2_1new}-\eqref{eq:MD2_2new} and $\widetilde{v}_{N,h}$ be the finite element solution of \eqref{eq:MD2_2fem}-\eqref{eq:MD2_1fem}. For sufficiently small $h>0$ and sufficiently large positive integer $N$, we have the following error estimations:
\begin{eqnarray}
 \label{eq:err_MD1}
 &&\left\|\widetilde{w}-\widetilde{w}_{N,h}\right\|_{L^2([0,1]\times\Omega_0)}\leq Ch\left(N^{-n}+h\right)\|\widetilde{w}\|_{C^\infty_\p\left([0,1];H^2_\p(\Omega_0)\right)};\\
  \label{eq:err_MD2}
 &&\left\|\widetilde{v}-\widetilde{v}_{N,h}\right\|_{L^2([0,1]\times\Omega_0)}\leq Ch\left(N^{-n}+h\right)\|\widetilde{v}\|_{C^\infty_\p\left([0,1];H^2_\p(\Omega_0)\right)}
\end{eqnarray}
where $C$ is a parameter  {depending} on $n$. 
\end{theorem}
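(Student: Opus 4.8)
The plan is to reduce the error estimates \eqref{eq:err_MD1} and \eqref{eq:err_MD2} to a Céa-type lemma for the discrete problems plus the interpolation estimate of the preceding theorem. I would begin by writing both discretized systems \eqref{eq:MD1_1_fem}--\eqref{eq:MD1_2_fem} and \eqref{eq:MD2_1_fem}--\eqref{eq:MD2_3_fem} in the abstract operator form used in the continuous analysis: the CCI system is a Galerkin discretization in $\widetilde{X}_{N,h}$ of $(\A-k^2\K^*\T\K)\widetilde{w}=-\K^*\L f$ (cf. \eqref{eq:MD1}), where after the change of variable $\alpha=s(t)$ the operators act on $\widetilde X=L^2((0,1);H^1_\p(\Omega_0))$; similarly the decomposition system is a Galerkin discretization of $(\B-k^2\J\M\T\J^{-1})\widetilde v=-\J\M\L f$ (cf. \eqref{eq:MD2}) with $\beta+\i\sigma=g(t)$. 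In each case the operator is of the form ``coercive/invertible plus compact'': by Theorem \ref{th:inv_cci} and Theorem \ref{th:inver_rc} the continuous operators are invertible, so by the standard discrete inf-sup theory for Fredholm operators (Galerkin approximation of operators of the form invertible $+$ compact, once the subspaces $\widetilde X_{N,h}$ are dense in $\widetilde X$) the discrete problems are uniquely solvable for $h$ small and $N$ large, and the discrete inf-sup constants are bounded below uniformly.

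Next I would invoke the quasi-optimality (Céa's lemma in the Babuška form): for $h$ sufficiently small and $N$ sufficiently large,
\[
\|\widetilde w-\widetilde w_{N,h}\|_{\widetilde X}\leq C\min_{z\in\widetilde X_{N,h}}\|\widetilde w-z\|_{\widetilde X},
\]
and likewise for $\widetilde v$. Here one must be slightly careful because the discrete schemes are not exactly the Galerkin projections of \eqref{eq:MD1}, \eqref{eq:MD2}: the inner integrals over $[0,1]$ are replaced by the trapezoidal rule implicit in the trigonometric interpolation $\xi_\ell$, and in the decomposition method the exact eigendata $(\lambda_{\ell,j},\phi_{\ell,j})$ are replaced by $(\lambda_{\ell,j}^h,\phi_{\ell,j}^h)$. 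The trapezoidal-rule consistency error is handled exactly as in \cite{Zhang2017e}: since $\widetilde w,\widetilde v\in C^\infty_\p([0,1];H^2_\p(\Omega_0))$ and the parameterizations $s,g$ were chosen so that $s',g'$ extend to smooth periodic functions, the quadrature is super-algebraically accurate, contributing a term $O(N^{-n})$ for every $n$. The eigendata perturbation contributes $O(E(h))$, which by the hypothesis $E(h)\to0$ is absorbed; in fact since the paper's final estimate is stated only up to the factor $h$ in front, one may simply require $E(h)\le C h$ (true once the eigenvalue computation is at least first-order, which the finite element discretization guarantees), so this term does not worsen the rate.

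Finally I would combine: by the interpolation theorem,
\[
\min_{z\in\widetilde X_{N,h}}\|\widetilde w-z\|_{L^2([0,1];H^1_\p(\Omega_0))}\leq C(N^{-n}+h)\|\widetilde w\|_{C^\infty_\p([0,1];H^2_\p(\Omega_0))},
\]
and substituting into the quasi-optimality bound gives $\|\widetilde w-\widetilde w_{N,h}\|_{L^2([0,1];H^1_\p(\Omega_0))}\leq C(N^{-n}+h)\|\widetilde w\|$. The extra factor $h$ appearing in \eqref{eq:err_MD1}--\eqref{eq:err_MD2} relative to this bound comes from measuring the error in the weaker norm $L^2([0,1]\times\Omega_0)=L^2([0,1];L^2(\Omega_0))$ rather than in $L^2([0,1];H^1_\p(\Omega_0))$: an Aubin--Nitsche duality argument in the $\Omega_0$-variable (using $H^2_\p$-regularity of the adjoint cell problems, which holds since $A(\alpha,k)$ is invertible and elliptic) gains one power of $h$, yielding the stated $Ch(N^{-n}+h)$. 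The same three steps apply verbatim to $\widetilde v$. The main obstacle is the bookkeeping for the non-Galerkin perturbations—the trapezoidal quadrature and the approximate eigensystem—i.e. verifying a Strang-type lemma showing these perturbations are dominated by the interpolation error; everything else is a routine application of the inf-sup/Céa machinery together with the already-cited results from \cite{Zhang2017e}.
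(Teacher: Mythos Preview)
The paper does not supply its own proof of this theorem; it is quoted verbatim as Theorem~34 of \cite{Zhang2017e} and simply cited. Your outline---discrete inf--sup stability inherited from the invertible-plus-compact structure of $\A-k^2\K^*\T\K$ and $\B-k^2\J\M\T\J^{-1}$, C\'ea/Babu\v{s}ka quasi-optimality, a Strang-type bound for the trapezoidal quadrature (super-algebraically small because the parameterizations $s,g$ make everything smoothly periodic), and an Aubin--Nitsche duality in $\Omega_0$ to gain the extra factor $h$ in the $L^2$ norm---is the standard route and almost certainly what \cite{Zhang2017e} does.

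One small correction to your bookkeeping: the eigendata perturbation $E(h)$ does \emph{not} enter the estimate \eqref{eq:err_MD2}. The system \eqref{eq:MD2_1new}--\eqref{eq:MD2_3_new} is already formulated with the approximate data $(\lambda^h_{\ell,j},\phi^h_{\ell,j},g^h_{\ell,j},f_0^h)$, so by definition $\widetilde v$ is the exact solution of that fixed (perturbed) variational problem, and $\widetilde v_{N,h}$ is its Galerkin approximation. Hence \eqref{eq:err_MD2} is a pure Galerkin error and needs no Strang term for the eigensystem; the $E(h)$ contribution appears only afterwards, in passing from $u^{(1)}_h$ to $u^{(1)}$ in \eqref{eq:err_MD2_final}. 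With that adjustment your argument is clean.
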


 {For both methods}, we get the original solution from \eqref{eq:MD1_2_fem}, and the error is also easily obtained:
\begin{eqnarray}
 \label{eq:err_MD1_final}
 \left\|u-u_{N,h}\right\|_{L^2(\Omega_0)}\leq Ch\left(N^{-n}+h\right)\|\widetilde{v}\|_{C^\infty_\p\left([0,1];H^2_\p(\Omega_0)\right)};\\
  \label{eq:err_MD2_final1}
 \left\|u^{(1)}_h-u^{(1)}_{N,h}\right\|_{L^2(\Omega_0)}\leq Ch\left(N^{-n}+h\right)\|\widetilde{v}\|_{C^\infty_\p\left([0,1];H^2_\p(\Omega_0)\right)}.
\end{eqnarray}
For the CCI method, the final error estimation is already obtained by \eqref{eq:err_MD1_final}. Then in the following, we mainly  {focus} on the decomposition method. 

Recall \eqref{eq:err_eig}, we have the error bound $\left\|u^{(1)}-u^{(1)}_h\right\|_{L^2(\Omega_0)}\leq O(h)$, which converges only linearly with respect to $h$. However, recall \eqref{eq:decomp_h},
\[
 u(x)=u^{(1)}_{h}(x)+u^{(2)}_h(x)
\]
and the numerical solution is given by
\[
 u_{N,h}=u^{(1)}_{N,h}(x)+u^{(2)}_h(x).
\]
Finally we get the error estimate for the decomposition method:
\begin{equation}\label{eq:err_MD2_final}
 \left\|u- {u}_{N,h}\right\|_{L^2(\Omega_0)}=\left\|u^{(1)}_h-u^{(1)}_{N,h}\right\|_{L^2(\Omega_0)}\leq Ch\left(N^{-n}+h\right)\|\widetilde{v}\|_{C^\infty_\p\left([0,1];H^2_\p(\Omega_0)\right)}.
\end{equation}

\section{Numerical examples}

In this section, we  {give} some numerical examples to show the efficiency of  {both methods}. For both cases, we apply the same finite element discretization for quasi-periodic problem \eqref{eq:per_var}, and the meshsize $h$ is chosen as $0.005,0.01,0,02,0,04$. The parameter $N$ is chosen as $4,8,16,32,64$. %For the decomposition method, we also show the convergence of the eigensystem $\left\{(\lambda_{\ell,j},\phi_{\ell,j}):\,\ell=1,2,\dots,m_j\right\}$ for any $j\in J$. 

We show numerical results for two different examples. For both examples, $f$ and $q$ are the same. $f$  {is} already defined in Remark 2, and $q$ is defined by:
\begin{equation*}
	q(x)=\begin{cases}
		0,\quad |x-b_0|>0.15;\\
		2,\quad 0.1<|x-b_0|<0.15;\\
		2\zeta(|x-b_0|;0.1,0.15),\quad\text{otherwise.}
	\end{cases}
\end{equation*}
where $b_0=(0.2,0.2)^\top$. In Example 1, the periodic refractive index $n=n_1$ is defined also in Remark 2, while in Example 2, 
\[
 n(x)=n_2(x)=3+\sin(4\pi x_1).
\]
The wave number is chosen as $\sqrt{17}$ in Example 1 and $\sqrt{12}$ in Example 2. We plot the dispersion diagrams for  {both} examples in Figure \ref{fig:dg}. From the diagrams we get the set of  {exceptional values:} For Example 1,
\[
 S(k)=\big\{-0.9577,\,0.9577\big\} \text{ and }S_-(k)=\big\{-0.9577\big\},\,S_+(k)=\big\{0.9577\big\};
\]
while for Example 2,
\[
 S(k)=\big\{-1.0326,\,1.0326\big\}\text{ and }S_-(k)=\big\{1.0326\big\},\,S_+(k)=\big\{-1.0326\big\}.
\]
Note that since the above results are numerical, they are not exact. Based on the exceptional values, we also show the integral contour $\Lambda$ in Figure \ref{fig:lambda}.

\begin{figure}[ht]
	\centering
	\begin{tabular}{c  c}
		\includegraphics[width=0.4\textwidth]{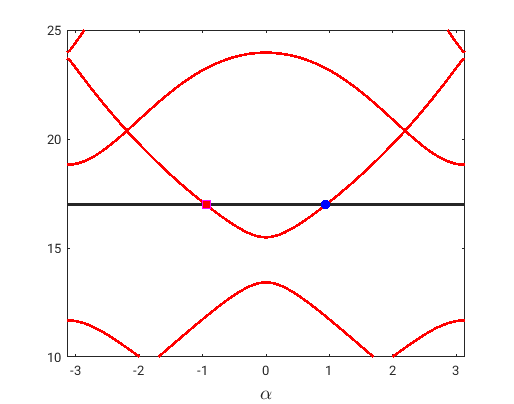} 
		& \includegraphics[width=0.4\textwidth]{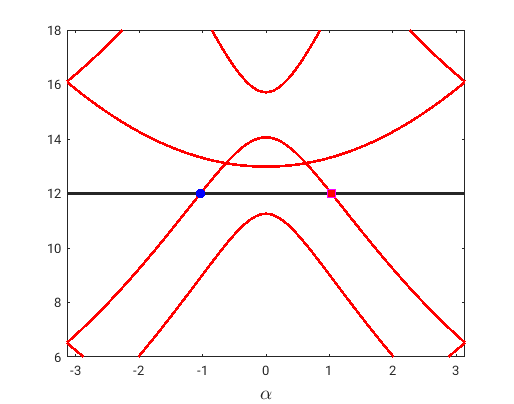}\\[-0cm]
		(a) & (b)
	\end{tabular}
	\caption{Dispersion diagrams: Example 1 in (a) and Example 2 in (b). Blue dots are points in $S_+(k)$ and purple squares are points in $S_-(k)$.}
	\label{fig:dg}
\end{figure}

\begin{figure}[ht]
	\centering
	\begin{tabular}{c  c}
		\includegraphics[width=0.4\textwidth]{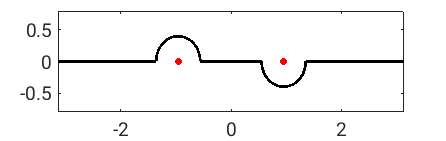} 
		& \includegraphics[width=0.4\textwidth]{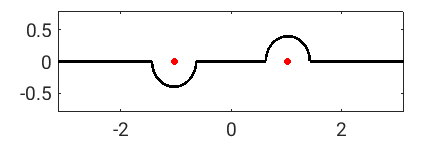}\\[-0cm]
		(a) & (b)
	\end{tabular}
	\caption{Integral contour $\Lambda$: Example 1 in (a) and Example 2 in (b). Red dots exceptional values.}
	\label{fig:lambda}
\end{figure}

\subsection{The convergence of exceptional values}

First, we show the convergence of the approximated exceptional values with respect to the meshsize $h$. As the convergence of eigenfunctions and $\lambda_{\ell,j}^h$ are  similar, they are omitted here. For $h=0.04,0.02,0.01,0.005$, we compute the exceptional values and the  {positive ones} are listed in Table \ref{table:exceptional_values}.

\begin{table}[H]
 \centering
 \begin{tabular}{|c  |c |c |c| c|}
 \hline
   & $h=0.04$ & $h=0.02$ & $h=0.01$ & $h=0.005$\\
   \hline\hline
   Example 1 & 0.8982 & 0.9435 & 0.9549 &0.9577  \\
   \hline
   Example 2 &  1.0738 & 1.0387 & 1.0337 & 1.0326  \\
   \hline
 \end{tabular}
\caption{ {Positive exceptional values }computed with different $h$'s.}
\label{table:exceptional_values}
\end{table}
If the values at $h=0.005$ is treated as ``exact'', then we plot the relative error with respect to the parameter $h$ in  logarithmic scales in the first picture in Figure \ref{fig:err}. From the plot, the slope  {of} the red curve is about $2.2$ and  {that of} the blue one is about $2.6$, which corresponds to (and even a little faster than) the convergence of the finite element method.  {Thus the convergence rates for the exceptional values are even faster than expected, i.e., $O(h^2)$.}

\subsection{Numerical results}

In this section, we focus on the numerical results obtained by the proposed methods. For both examples, we use a completely different method  {given} in \cite{Ehrhardt2009a} to  {produce ``exact solutions''}. In the computation we use Lagrangian element with meshsize $0.005$ and an extrapolation technique with data points $0.001,0.0005,0.00025$, and the solution is denoted by $u_{exa}$. Then the error is estimated as:
\[
 err_{N,h}=\frac{\|u_{N,h}-u_{exa}\|_{L^2(\Omega_0)}}{\|u_{exa}\|_{L^2(\Omega_0)}}.
\]
% {Note that in all the numerical examples by the CCI method, we actually use $N$ points on each smooth segment of the integral contour. So the computational complexity for the CCI method is 4 times as large as the decomposition method for both examples. }
For the decomposition method, the parameter $\sigma$ is chosen to be $0.2$. 
For the relative errors with different examples and methods we refer to Table \ref{table:eg1_md1}-\ref{table:eg2_md2}. From the four tables, the relative errors decrease as $h$ gets smaller and $N$ gets larger, but  {the decrease stops} at the level of $3\times 10^{-3}$. This may due to the lack of accuracy of the ``exact solutions''. 

\begin{table}[H]
 \centering
\begin{tabular}{|P{2cm}|P{2cm}|P{2cm}|P{2cm}|P{2cm}|}
 \hline
  & $h=0.04$ & $h=0.02$ &$ h=0.01$ & $h=0.005$\\
  \hline
  \hline
  $N=16$ & $1.98$E$-1$ & $1.85$E$-1$ & $1.80$E$-1$ & $1.78$E$-1$\\
  \hline
  $N=32$ & $8.87$E$-2$ & $5.26$E$-2$ & $4.30$E$-2$ & $4.06$E$-2$\\
  \hline
  $N=64$ & $6.21$E$-2$ & $1.96$E$-2$ &$ 7.54$E$-3$ & $4.60$E$-3$\\
  \hline
 $ N=128$& $6.12$E$-2$ & $1.87$E$-2$ & $6.60$E$-3 $& $3.82$E$-3$\\
 \hline
 $ N=256$& $6.12$E$-2$ & $1.87$E$-2 $& $6.60$E$-3$ & $3.82$E$-3$\\
 \hline
 \end{tabular}
\caption{Relative error for Example 1, CCI method.}
\label{table:eg1_md1}
\end{table}

\begin{table}[H]
 \centering
 \begin{tabular}{|P{2cm}|P{2cm}|P{2cm}|P{2cm}|P{2cm}|}
 \hline
  & $h=0.04$ & $h=0.02$ &$ h=0.01$ & $h=0.005$\\
  \hline\hline
  $N=16$ &$7.89$E$-2$ &$2.00$E$-2$ &$5.42$E$-3$ &$3.13$E$-3$\\
  \hline
  $N=32$ &$8.27$E$-2$ &$2.05$E$-2$& $5.44$E$-3$ &$3.09$E$-3$\\
  \hline
 $ N=64$ &$8.38$E$-2$ &$2.08$E$-2$& $5.44$E$-3$ &$3.05$E$-3$\\
 \hline
 $ N=128$ &$8.36$E$-2$  &$2.08$E$-2$& $5.44$E$-3$ &$3.05$E$-3$\\
 \hline
 $ N=256$ &$8.36$E$-2$  &$2.08$E$-2$&$5.44$E$-3$ & $3.05$E$-3$\\
 \hline
 \end{tabular}
 \caption{Relative error for Example 1, decomposition method.}
\label{table:eg1_md2}
\end{table}

\begin{table}[H]
 \centering
\begin{tabular}{|P{2cm}|P{2cm}|P{2cm}|P{2cm}|P{2cm}|}
 \hline
  & $h=0.04$ & $h=0.02$ &$ h=0.01$ & $h=0.005$\\
  \hline\hline
 $ N=16 $&$6.87$E$-2 $&$4.63$E$-2$& $4.28$E$-2$& $4.22$E$-2$\\
 \hline
  $N=32 $&$4.80$E$-2 $&$1.51$E$-2$& $7.07$E$-3 $&$5.78$E$-3$\\
  \hline
  $N=64 $&$4.71$E$-2 $&$1.36$E$-2$& $4.67$E$-3$& $2.98$E$-3$\\
  \hline
  $N=128$& $4.71$E$-2$ &$1.36$E$-2$&$ 4.69$E$-3$&$ 3.02$E$-3$\\
  \hline
 $ N=256 $&$4.71$E$-2 $&$1.36$E$-2$& $4.69$E$-3$& $3.02$E$-3$\\
 \hline
 \end{tabular}
 \caption{Relative error for Example 2, CCI method.}
\label{table:eg2_md1}
\end{table}

\begin{table}[H]
 \centering
\begin{tabular}{|P{2cm}|P{2cm}|P{2cm}|P{2cm}|P{2cm}|}
 \hline
  & $h=0.04$ & $h=0.02$ &$ h=0.01$ & $h=0.005$\\
  \hline\hline
  $N=16$ &$9.01$E$-2$ &$8.04$E$-2$& $7.92$E$-2$ &$7.92$E$-2$\\
  \hline
  $N=32$ &$4.92$E$-2$ &$3.25$E$-2$& $3.05$E$-2$ &$3.03$E$-2$\\
  \hline
  $N=64$ &$3.71$E$-2$ &$1.11$E$-2$ &$4.83$E$-3 $&$3.84$E$-3$\\
  \hline
  $N=128$&$3.70$E$-2$ &$1.08$E$-2$ &$4.11$E$-3 $&$2.92$E$-3 $\\
  \hline
  $N=256$&$3.70$E$-2$ &$1.08$E$-2$ &$4.12$E$-3 $&$2.94$E$-3 $\\
  \hline
 \end{tabular}
\caption{Relative error for Example 2, decomposition method.}
\label{table:eg2_md2}
\end{table}

Now let's turn to the convergence rate. For both examples and methods, we study the dependence of the errors on $h$ and $N$ separately. To study the dependence on $h$, we fix a large $N$, i.e., $N=256$ and let the solutions with $h=0.005$ be the ``exact'' ones. Then we show the relative errors 
\[
 err_{256,h}=\frac{\|u_{256,h}-u_{256,0.005}\|_{L^2(\Omega_0)}}{\|u_{256,0.005}\|_{L^2(\Omega_0)}}\]
in Table \ref{table:FEM}. To study the dependence on $N$, we fix $h=0.005$ and let the solutions with $N=256$ be ``exact'' ones. 
Then we show the relative errors 
\[
 err_{N,0.005}=\frac{\|u_{N,0.005}-u_{256,0.005}\|_{L^2(\Omega_0)}}{\|u_{256,0.005}\|_{L^2(\Omega_0)}}.
\]
in Table \ref{table:QDR}. We also plot the data in logarithmic scales in Figure \ref{fig:err}. (b) shows the dependence on $h$ and (c) shows the dependence on $N$.  {In Figure \ref{fig:err} (b)}, the four curves are almost straight and the slopes are almost  2. This shows that the convergence rate with respect to $h$ is about $O(h^2)$. In (c), the curves are no longer close to straight ones and the slopes  {get} faster as $\log N$ gets larger. This implies that the convergence rate is super-algebraic. Both results coincide with the error estimations given in Theorem \ref{th:err}.

\begin{table}[H]
 \centering
\begin{tabular}{|P{2cm}|P{2.7cm}|P{2.7cm}|P{2.7cm}|P{2.7cm}|}
 \hline
  & Example 1 (CCI) & Example 1 (D) & Example 2 (CCI)& Example 2 (D)\\
  \hline\hline
  $h=0.04$ &$5.96$E$-2$ &$8.32$E$-2$& $4.54$E$-2$ &$3.55$E$-2$\\
  \hline
  $h=0.02$ &$1.60$E$-2$ &$2.00$E$-2$& $1.18$E$-2$ &$9.11$E$-3$\\
  \hline
  $h=0.01$ &$3.35$E$-3$ &$3.98$E$-3$ &$2.34$E$-3 $&$1.79$E$-3$\\
  \hline
 \end{tabular}
\caption{Relative errors with different $h$'s for fixed $N=256$.}
\label{table:FEM}
\end{table}

\begin{table}[H]
 \centering
\begin{tabular}{|P{2cm}|P{2.7cm}|P{2.7cm}|P{2.7cm}|P{2.7cm}|}
 \hline
  & Example 1 (CCI) & Example 1 (D) & Example 2 (CCI)& Example 2 (D)\\
  \hline\hline
  $N=16$ &$1.76$E$-1$ &$1.17$E$-4$& $4.28$E$-2$ &$8.02$E$-2$\\
  \hline
  $N=32$ &$3.87$E$-2$ &$5.52$E$-5$& $5.82$E$-3$ &$3.13$E$-2$\\
  \hline
  $N=64$ &$1.37$E$-3$ &$4.06$E$-6$ &$1.16$E$-4 $&$3.71$E$-3$\\
  \hline
  $N=128$&$1.57$E$-6$ &$4.94$E$-8$ &$4.30$E$-8 $&$4.47$E$-5 $\\
  \hline
 \end{tabular}
\caption{Relative errors with different $N$'s for fixed $h=0.005$.}
\label{table:QDR}
\end{table}

 {Finally,} we also show the error between the two different methods with the same parameters. Since the super-algebraic convergence of both algorithms with respect to $N$ is already shown, we fix $N=256$ and compute the relative errors for different $h$'s:
\[
 err_{h,256}=\frac{\left\|u^{CCI}_{256,h}-u^D_{256,h}\right\|_{L^2(\Omega_0)}}{\left\|u^{CCI}_{256,h}\right\|_{L^2(\Omega_0)}}
\]
where $u^{CCI}_{N,h}$ and $u^D_{N,h}$ are numerical results obtained from the CCI method and decomposition method, respectively. The relative  {errors} are shown in Table \ref{table:eg_compare}. The 
data are also plotted in logarithmic scales in (d) in Figure \ref{fig:err}. For Example 1, the slope of the curve is about $1.8$ and for Example 2, the slope is about $1.9$. The results also  {correspond} to the error analysis of the finite element method and the convergence of exceptional values shown in (a) in Figure \ref{fig:err}. Moreover, since the solutions for both methods coincide with each other, we can trust  {both algorithms}.

\begin{table}[H]
 \centering
\begin{tabular}{|P{2cm}|P{2cm}|P{2cm}|P{2cm}|P{2cm}|}
 \hline
  & $h=0.04$ & $h=0.02$ &$ h=0.01$ & $h=0.005$\\
  \hline\hline
  Example 1 &$5.08$E$-2$ &$1.76$E$-2$& $4.80$E$-3$ &$1.25$E$-3$\\
  \hline
  Example 2 &$1.25$E$-2$ &$3.44$E$-3$& $8.68$E$-4$ &$2.20$E$-4$\\
  \hline
 \end{tabular}
\caption{ {Relative error between two methods.}}
\label{table:eg_compare}
\end{table}

\begin{figure}[H]
	\centering
	\begin{tabular}{c   c}\includegraphics[width=0.4\textwidth]{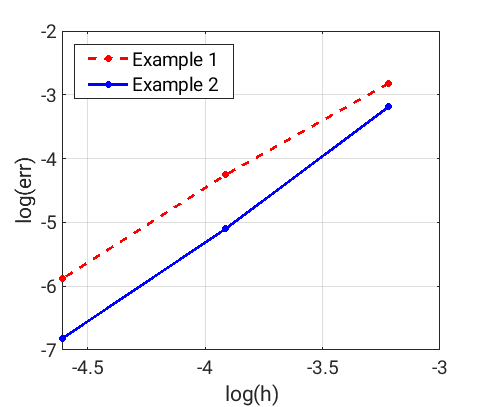} &
		\includegraphics[width=0.4\textwidth]{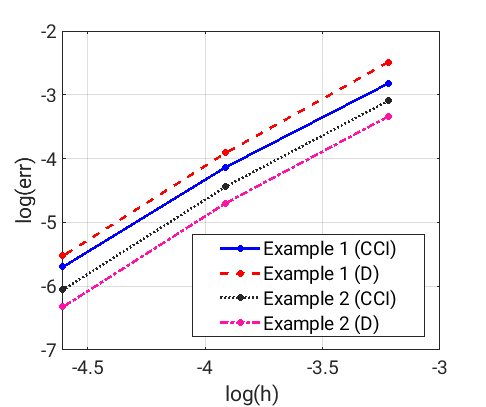} \\
		(a)&(b)\\
		 \includegraphics[width=0.4\textwidth]{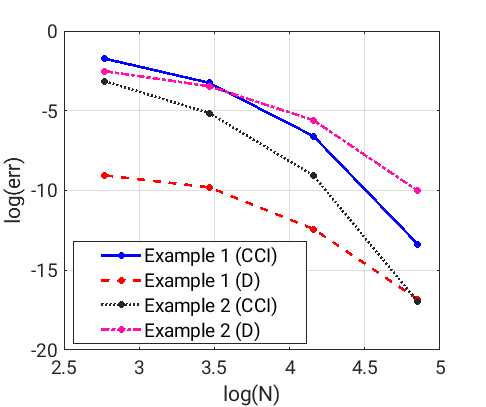}
		& \includegraphics[width=0.4\textwidth]{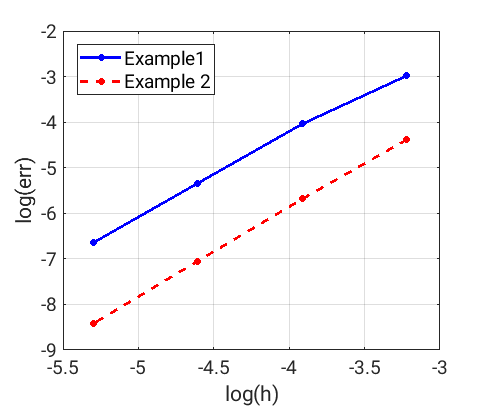}\\
		(c) &(d)
	\end{tabular}
	\caption{Convergence of exceptional values (a), relative errors depend on $h$ (b) and $N$ (c).}
	\label{fig:err}
\end{figure}

\section{Conclusion}

In the last section of this paper, we compare the two algorithms. Both methods have second order convergence  with respect to the meshsize and  {converge} super  {algebraically} with respect to the number of nodal points on $[0,1]$. The computational complexity is also similar with the same parameters.  {Both methods need to be computed by two steps. In the first step, the exceptional values and eigenfunctions are computed, where a normalization process is needed for the decomposition method. Note that,  {the computation of the  normalization is so small  that} it can be omitted, since $m_j$ is always a very small number. In the second step, we need to solve the linear system \eqref{eq:MD1_matrix} for the CCI method, and \eqref{eq:MD2_matrix} for the decomposition method. Both matrices are of the same type and size, so the  {computational} complexities and  {time} are also similar. 

Both methods also have their advantages and disadvantages. The CCI method requires the additional Assumption \ref{asp2} which is not necessary for the LAP process and the decomposition method. Fortunately this only excludes a discrete subset of $(0,+\infty)$. The propagating modes are not obtained from the CCI method. But for the CCI method, we don't need very accurate approximations for the exceptional values. On the other hand, the decomposition method  needs good approximations for the exceptional values, but it does not need Assumption \ref{asp2} and the propagating modes are also computed directly.  {Moreover, due to the corners on the contour in Figure \ref{fig:lambda}, the error from the complex contour discretization is expected to be larger than the decomposition method for the same $N$.} Since both algorithms  {converge} super-algebraically with respect to the parameter $N$, they are very efficient and we can choose  {either} algorithm according to different settings and requirements.}

\section*{Appendix}

\subsection{Smooth and analytic functions in Banach spaces}
First we recall definitions of smooth and analytic functions with values in Banach spaces.

\begin{definition}
 \label{def:banach}
 Suppose $F$ is a map from an open set $U\subset \C^N$ into a complex Banach space $X$. Then
 \begin{itemize}
  \item $F$ is analytic at $z_0\in U$ if there is an $R>0$ and a series $\{f_n:\,n\in\N\}\subset X$ such that 
  \[
   F(z)=\sum_{n=0}^\infty \frac{(z-z_0)^n}{n!}f_n
  \]
converges uniformly for $z\in B(z_0,R)\cap U$ where $B(z_0,R)$ is the disk with center $z_0$ and radius $R$.
\item $F$ is smooth at $z_0\in U$ if   {its Fr{\'e}chet derivative exists for any order}.
 \end{itemize}

\end{definition}

With Definition \ref{def:banach}, we can easily define spaces $C^\omega([a,b];H^s(\Omega_0))$, $C^\infty([a,b];H^s(\Omega_0))$ where the functions depend analytically or smoothly on the first variable. The space $C^\infty_\p([a,b];H^s(\Omega_0))$ is the subspace of $C^\infty([a,b];H^s(\Omega_0))$ with  {an additional periodic condition  on  the first variable. }

We  introduce the Floquet-Bloch transform. For a function $\phi\in C_0^\infty(\Omega)$, define the transform
\[
 (\J\phi)(\alpha,x):=(2\pi)^{-1/2}\sum_{j\in\Z}\phi\left(x+\left(\begin{matrix}
                                             j\\0
                                            \end{matrix}
 \right) \right)e^{-\i\alpha (x_1+j)}.
\]
It is easily checked that with fixed $\alpha\in\R$, $(\J\phi)(\alpha,\cdot)$ is $1$-periodic in $x_1$-direction. For fixed $x\in\Omega_0$, $e^{\i\alpha x_1}(\J\phi)(\alpha,x)$ is $2\pi$-periodic in $\alpha$. 
The properties of the transform  {are recalled} in the following theorem. For details we refer to \cite{Kuchm1993,Lechl2015e}.
\begin{theorem}
 \label{th:FBT}
 The Floquet-Bloch transform is extended to an isometry between $H^s(\Omega)$ and $L^2((-\pi,\pi);H^s_\p(\Omega_0))$ for any $s\in\R$ and its inverse transform is:
 \[
  \left(\J^{-1}\psi\right)(x)=\frac{1}{2\pi}\int_{-\pi}^\pi\psi(\alpha,x)e^{\i\alpha x_1}\d\alpha.
 \]
Moreover, the adjoint operator of $\J$ with respect to the inner product of $L^2((-\pi,\pi);L^2(\Omega_0))$, denoted by $\J^*$, equals to $\J^{-1}$.\\
$\J\phi$ depends analytically on $\alpha$ if and only if $\phi$ decays exponentially when $|x_1|\rightarrow\infty$. \\
Note here the subscript $\p$ is to indicate that the function is periodic  {with respect to $x_1$.}
\end{theorem}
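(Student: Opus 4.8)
The statement is the periodic-waveguide analogue of Plancherel's theorem for Fourier series, so the plan is to reduce everything to the orthogonality relation $\int_{-\pi}^{\pi}e^{\i\alpha(j-j')}\d\alpha=2\pi\,\delta_{j,j'}$ and then bootstrap. First I would settle the case $s=0$. For $\phi\in C_0^\infty(\Omega)$ write $\phi_j(x):=\phi\big(x+(j,0)^\top\big)$ for $x\in\Omega_0$ (only finitely many nonzero); then $(\J\phi)(\alpha,x)=(2\pi)^{-1/2}e^{-\i\alpha x_1}\sum_{j}\phi_j(x)e^{-\i\alpha j}$, from which the $1$-periodicity of $(\J\phi)(\alpha,\cdot)$ in $x_1$ and the $2\pi$-periodicity of $e^{\i\alpha x_1}(\J\phi)(\alpha,x)$ in $\alpha$ are immediate. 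Since $|e^{-\i\alpha x_1}|=1$ for real $\alpha$, Fubini together with the orthogonality relation gives
\[
\|\J\phi\|_{L^2((-\pi,\pi);L^2(\Omega_0))}^2=\int_{\Omega_0}\sum_{j}|\phi_j(x)|^2\d x=\sum_{j}\int_{\Omega_j}|\phi|^2\d x=\|\phi\|_{L^2(\Omega)}^2 ,
\]
so $\J$ extends by density to an isometry $L^2(\Omega)\to L^2((-\pi,\pi);L^2(\Omega_0))$. Applying $\int_{-\pi}^{\pi}(\cdot)\,e^{\i\alpha x_1}\d\alpha$ to $\J\phi$ and using the same orthogonality relation recovers $\phi$; this simultaneously identifies $\J^{-1}$ and shows that the range of $\J$ is all of $L^2((-\pi,\pi);L^2_\p(\Omega_0))$ (it already contains the dense set of $\alpha$-trigonometric polynomials with $C_0^\infty(\Omega_0)$ coefficients). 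A surjective isometry is unitary, hence $\J^*=\J^{-1}$; alternatively $\langle\J\phi,\psi\rangle=\langle\phi,\J^{-1}\psi\rangle$ follows directly by Fubini and the periodicity of $\psi$.

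For $s\neq 0$ I would use that $\J$ intertwines the differential operators, $\partial_{x_2}\J=\J\partial_{x_2}$ and $\J\partial_{x_1}=(\partial_{x_1}+\i\alpha)\J$, so that $\J$ maps $H^s(\Omega)$ into $L^2((-\pi,\pi);H^s_\p(\Omega_0))$ and back with norms equivalent uniformly in $\alpha$ (because $\alpha$ ranges over the bounded set $(-\pi,\pi)$). To upgrade ``equivalent norms'' to ``isometry'' one fixes the natural $\alpha$-dependent norm on $H^s_\p(\Omega_0)$ coming from the quasi-periodic Fourier modes $\xi=\alpha+2\pi m$ together with the Neumann eigenfunction expansion in $x_2$; with that convention $\J$ is, modewise, exactly the identification between the two weighted-$\ell^2$ models (Fourier transform in $x_1$, folded over the Brillouin zone $(-\pi,\pi)$, tensored with the $x_2$-expansion), and non-integer or negative $s$ then follow by interpolation and duality. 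This bookkeeping is the step I would lean on \cite{Kuchm1993,Lechl2015e} for.

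Finally, for the analyticity dichotomy I would regard $\alpha\mapsto(\J\phi)(\alpha,\cdot)$ as an $H^s_\p(\Omega_0)$-valued function whose $\alpha$-Fourier coefficients are, by the orthogonality relation of the first step, the translates $\phi(\cdot+(j,0)^\top)$, up to the entire non-vanishing modulation $e^{-\i\alpha(\cdot)_1}$ which does not affect holomorphy. If $\phi$ decays exponentially, i.e.\ $\|\phi(\cdot+(j,0)^\top)\|_{H^s_\p(\Omega_0)}\le Ce^{-\delta|j|}$ for some $\delta>0$, then the series $(2\pi)^{-1/2}\sum_j\phi_j\,e^{-\i\alpha(\cdot_1+j)}$ converges absolutely and locally uniformly for $\alpha$ in the strip $|\Im\alpha|<\delta$, valued in $H^s_\p(\Omega_0)$, and being a locally uniform limit of entire $H^s_\p(\Omega_0)$-valued functions it is analytic there in the sense of Definition \ref{def:banach}. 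Conversely, if $(\J\phi)(\alpha,\cdot)$ is analytic near a real value of $\alpha$, then by the $2\pi$-periodicity in $\Re\alpha$ it extends analytically to a full strip, and Cauchy's estimates on that strip applied to the $\alpha$-Fourier coefficients force $\|\phi(\cdot+(j,0)^\top)\|_{H^s_\p(\Omega_0)}$ to decay exponentially in $j$, i.e.\ $\phi$ decays exponentially as $|x_1|\to\infty$.

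The $s=0$ computation and the inversion formula are a one-line orthogonality argument, and the analyticity part is a textbook Paley--Wiener statement; the real work --- and the main obstacle --- is the middle step, namely pinning down the spectral definitions of $H^s_\p(\Omega_0)$ and $H^s(\Omega)$ (adapted to the Neumann boundary conditions) precisely enough that ``isometry'' holds on the nose rather than merely up to equivalent norms.
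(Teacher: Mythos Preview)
The paper does not prove this theorem at all: it simply records the statement in the Appendix and refers the reader to \cite{Kuchm1993,Lechl2015e} for details. Your proposal therefore cannot be compared to a ``paper's own proof'' --- there is none --- but it is correct and follows exactly the standard route found in those references: Parseval in $\alpha$ for the $s=0$ isometry and the inversion formula, the intertwining relations $\J\partial_{x_1}=(\partial_{x_1}+\i\alpha)\J$ and $\J\partial_{x_2}=\partial_{x_2}\J$ to pass to general $s$, and a Paley--Wiener argument on the $\alpha$-Fourier coefficients for the analyticity dichotomy. Two minor remarks: (i) your inversion step actually recovers $\sqrt{2\pi}\,\phi$ rather than $\phi$, so the normalisation in the stated inverse should carry $(2\pi)^{-1/2}$ rather than $(2\pi)^{-1}$ --- this is a typo in the paper, not in your argument; (ii) you are right to flag that ``isometry on the nose'' for $s\neq 0$ depends on choosing the $\alpha$-adapted norm on $H^s_\p(\Omega_0)$, and that this bookkeeping is exactly what the cited references supply.
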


 {\subsection{Normalization of the eigensystem}

In this subsection, the Assumption \ref{asp3} no longer holds. Then by solving \eqref{eq:gep}, we find all eigenvalues and eigenfunctions
\[
 \left\{\left(\widehat{\beta}_j^h,\widetilde{\phi}_{\ell,j}^h\right):\,j\in J^h,\,\ell=1,2,\dots,m_j \right\},
\]
where $m_j$ is a positive integer.
Then we normalize the system to get the eigenfunction $\widehat{\phi}^h_{\ell, j}$ such that it satisfies  \eqref{eq:orth_relation} and \eqref{eq:normal}.  The eigenfunctions are  {of} the form 
\begin{equation}\label{eq:expand}
	\widehat{\phi}_{\ell,j}^h(x)=\sum_{\ell'=1}^{m_j} c_{\ell,\ell'}^j \,\widetilde{\phi}_{\ell',j}^h(x)
\end{equation}
where the coefficients $c_{\ell,\ell'}^j\in\C$. Thus the problem is then to find out the approximated eigenvalues $\lambda_{\ell,j}^h$ and coefficients $c^{j,h}_{\ell,\ell'}$ for all $\ell,\ell'=1,2,\dots,m_j$. From \eqref{eq:orth_relation}, for fixed $\ell=1,2,\dots,m_j$,
\[
\sum_{\ell'=1}^{m_j}c^{j,h}_{\ell,\ell'}\left(\int_{\Omega_0}\left[-\i\frac{\partial }{\partial x_1}\widetilde{\phi}_{\ell',j}^h+\widehat{\beta}_j \widetilde{\phi}_{\ell',j}^h\right]\right)\overline{\widetilde{\phi}_{\ell'',j}^h}\d x=\lambda^h_{\ell,j}\sum_{\ell'=1}^{m_j} c^{j,h}_{\ell,\ell'}\left(k\int_{\Omega_0}\widetilde{\phi}_{\ell',j}^h\overline{\widetilde{\phi}_{\ell'',j}^h}\d x\right)
\]
holds for any $\ell''=1,2,\dots,m_j$. Let
\[
a^j_{\ell,\ell'}=\int_{\Omega_0}\left[-\i\frac{\partial }{\partial x_1}\widetilde{\phi}_{\ell',j}^h+\widehat{\beta}_j \widetilde{\phi}_{\ell',j}^h\right]\overline{\widetilde{\phi}_{\ell,j}^h}\d x;\quad b^j_{\ell,\ell'}=k\int_{\Omega_0}\widetilde{\phi}_{\ell',j}^h\overline{\widetilde{\phi}_{\ell,j}^h}\d x,
\]
then
\[
\left(\begin{matrix}
	a^j_{1,1} & a^j_{1,2} &\cdots & a^j_{1,m_j}\\
	a^j_{2,1} & a^j_{2,2} &\cdots & a^j_{2,m_j}\\
	\vdots & \vdots & \cdots & \vdots \\
	a^j_{m_j,1} & a^j_{m_j,2} &\cdots & a^j_{m_j,m_j}
\end{matrix}
\right) \left(\begin{matrix}
	c^{j,h}_{\ell,1} \\ c^{j,h}_{\ell,2} \\ \vdots \\ c^{j,h}_{\ell,m_j}
\end{matrix}
\right)= \lambda^h_{\ell,j}\left(\begin{matrix}
	b^j_{1,1} & b^j_{1,2} &\cdots & b^j_{1,m_j}\\
	b^j_{2,1} & b^j_{2,2} &\cdots & b^j_{2,m_j}\\
	\vdots & \vdots & \cdots & \vdots \\
	b^j_{m_j,1} & b^j_{m_j,2} &\cdots & b^j_{m_j,m_j}      
\end{matrix}
\right) \left(\begin{matrix}
	c^{j,h}_{\ell,1} \\ c^{j,h}_{\ell,2} \\ \vdots \\ c^{j,h}_{\ell,m_j}      
\end{matrix}
\right).
\]
By solving this generalized eigenvalue problem, we get all the coefficients $c^{j,h}_{\ell,\ell'}$ and eigenvalues $\lambda^h_{\ell,j}$. Then the function $\widehat\phi^h_{\ell,j}$ is obtained directly by \eqref{eq:expand}. Finally, we normalize the functions $\widehat\phi^h_{\ell,j}$ by
\[
2k\int_{\Omega_0} n(x)\widehat\phi^h_{\ell,j}(x)\overline{\widehat\phi^h_{\ell,j}(x)}\d x=1.
\]

}

\section*{Acknowledgements }
Funded by the Deutsche Forschungsgemeinschaft (DFG, German Research Foundation) – Project-ID 258734477 – SFB 1173. The author is grateful for Prof. Andreas Kirsch for valuable discussions and suggestions.

\bibliographystyle{plain}
\providecommand{\noopsort}[1]{}

\end{document}